\newcommand{\cu}[1]{\bm{#1}} 
\newtheorem{assumption}{Assumption}[section]
\newtheorem{theorem}{Theorem}
\newtheorem{remark}{Remark}
\begin{document}

\begin{frontmatter}



\title{Nodal Hybrid Neural Solvers for Parametric PDE Systems} 


\author[a]{Yun Liu}
\author[b]{Chen Cui}
\ead{chencui@mail.tsinghua.edu.cn}
\author[a]{Shi Shu}
\ead{shushi@xtu.edu.cn}
\author[a]{Zhen Wang}

\affiliation[a]{organization={Hunan Key Laboratory for Computation and Simulation in Science and Engineering,
Key Laboratory of Intelligent Computing and Information Processing of Ministry
of Education, School of Mathematics and Computational Science, Xiangtan University},
            city={Xiangtan},
            postcode={411105}, 
            state={Hunan},
            country={China}}
\affiliation[b]{organization={Yau Mathematical Sciences Center, Tsinghua University},
            city={Beijing},
            postcode={100084}, 
            country={China}}

\begin{abstract}
The numerical solution of partial differential equations (PDEs) is fundamental to scientific and engineering computing. In the presence of strong anisotropy, material heterogeneity, and complex geometries, however, classical iterative solvers often suffer from reduced efficiency and require substantial problem-dependent tuning. The Fourier neural solver (FNS) \cite{cui2025hybrid} is a learning-based hybrid iterative solver for such problems without extensive manual parameter tuning, but its original design is primarily effective for scalar PDEs on structured meshes and is difficult to extend directly to unstructured meshes or strongly coupled PDE systems.
Building on the FNS framework, we introduce block smoothing operators and graph neural networks to construct a solver for unstructured systems, termed the graph Fourier neural solver (G-FNS). We further incorporate a coordinate transformation network to develop the adaptive graph Fourier neural solver (AG-FNS), and then extend this formulation to a frequency-domain multilevel variant, ML-AG-FNS.
Rigorous analysis shows that, under suitable mathematical assumptions, the proposed method achieves mesh-independent convergence rate. Error-spectrum visualizations further indicate that AG-FNS can capture complex multiscale error modes. Extensive experiments on two-dimensional anisotropic diffusion and on two- and three-dimensional isotropic/anisotropic linear elasticity problems over unstructured meshes demonstrate strong robustness and efficiency. The proposed framework can be used either as a solver or as a preconditioner for Krylov subspace methods. Overall, it substantially extends the original FNS methodology and broadens the applicability of this class of neural solvers.
\end{abstract}

\begin{keyword}
     Hybrid iterative method \sep
     Neural solver \sep
     Preconditioning \sep
     Linear elasticity \sep 
     Graph neural network
\end{keyword}

\end{frontmatter}



\section{Introduction}

Partial differential equations (PDEs) are fundamental mathematical tools for describing continuous physical processes and play a central role in scientific and engineering applications, including solid mechanics, fluid dynamics, electromagnetics, heat transfer, and multiphysics coupling problems \cite{evans2022partial}. 
However, the complexity of real-world problems introduces significant challenges for PDE solvers. 
For instance, computational domains are often geometrically irregular, and many PDEs are inherently vector-valued with strong multiphysics coupling, which substantially increases the difficulty of numerical solution.

The finite element method (FEM) is widely used for PDE discretization due to its flexibility in handling complex geometries and boundary conditions \cite{rannacher1997feed}, which typically results in large-scale sparse linear systems.
Classical iterative methods \cite{saad2003iterative}, such as Jacobi, Gauss--Seidel, the conjugate gradient (CG) method, and the generalized minimal residual (GMRES) method, combined with advanced preconditioning techniques including multigrid methods \cite{trottenberg2000multigrid,li2025vertex} and domain decomposition methods \cite{toselli2004domain}, have demonstrated remarkable performance for solving such systems. 
Nevertheless, when the system exhibits strong heterogeneity, severe anisotropy, or complex unstructured mesh topology, these traditional approaches may suffer from significantly degraded convergence rates, difficulty in constructing effective preconditioners, and the need for empirical tuning of algorithmic parameters. 
These limitations reduce their usability and automation in practical applications.

Recently, data-driven deep learning approaches have attracted considerable attention due to their strong capability for feature extraction and automatic parameter optimization. 
For solving physical problems, existing studies mainly follow two directions: physics-informed neural networks (PINNs) and operator-learning methods.
PINNs incorporate the governing PDEs or their variational formulations into the loss function and directly approximate the solution of specific boundary value problems \cite{wang2025novel, weinan2018deep}.
Neural operator methods, such as DeepONet \cite{lu2021learning} and the Fourier Neural Operator (FNO) \cite{lifourier}, aim to learn mappings from input parameter fields (e.g., material properties or forcing terms) to solution fields and have shown promising potential as alternatives to traditional numerical solvers \cite{taghikhani2025neural}. 
Although these approaches partially bypass the explicit treatment of complex domains and equation structures, purely neural-network-based surrogate models still struggle to fully satisfy the stringent accuracy and numerical stability requirements of engineering simulations.

Motivated by these limitations, recent studies has explored hybrid approaches that integrate deep learning with classical iterative methods, leading to a new paradigm known as \emph{neural solvers}. 
Existing studies in this direction can be broadly categorized into two groups.
The first category focuses on learning key parameters within classical iterative algorithms. 
These approaches retain the mathematical structure of the original algorithm while employing neural networks to optimize certain parameters, and their convergence is typically guaranteed by the theoretical framework of the underlying numerical method. 
Examples include learning the weight parameters in Chebyshev iterations \cite{WANG2026117013}, learning optimal diagonal preconditioners for weighted Jacobi methods \cite{moore2025graph}, learning smoothing operators \cite{katrutsa2020black,huang2022learning,chen2022meta} or coarse operators \cite{greenfeld2019learning,luz2020learning,huang2024reducing} in multigrid methods, learning transmission conditions between subdomains \cite{taghibakhshi2022learning} or coarse spaces \cite{taghibakhshi2023mg,kopanivcakova2025deeponet} in domain decomposition methods, and generating high-quality initial guesses for Krylov subspace methods \cite{arisaka2021gradient,NOVELLO2024112700}. 
However, these approaches do not fundamentally change the convergence order of the underlying algorithms. 
For ill-conditioned problems, even optimally tuned parameters may still lead to slow convergence.

The second category attempts to replace part or all of the functionality of preconditioners using neural networks. 
A representative approach is the deep-learning-based hybrid iterative method (DL-HIM), where simple stationary iterations (such as Jacobi iterations) eliminate high-frequency errors, while neural networks are used to correct low-frequency components, forming a frequency-complementary strategy \cite{zhang2024blending, cui2022fourier, cui2025hybrid, hu2025hybrid, hsieh2018learning, sun2025learning}. 
Among these approaches, the Fourier neural solver (FNS) \cite{cui2022fourier,cui2025hybrid} introduces a hybrid strategy based on Fourier transforms and matrix factorization. 
This method performs well for a wide range of diffusion-type equations. 
However, its design mainly targets scalar problems on structured meshes, and extending it to unstructured meshes or coupled (vector-valued) PDE systems remains challenging due to the difficulty in constructing suitable matrix stencil representations. 
Another related direction is neural-operator-based preconditioning, where neural operators such as FNO are used to learn approximate inverses of coefficient matrices and embed them into Krylov methods \cite{dimola2025numerical, chen2025graph, trifonov2025efficient, xu2025neural}. 
Most existing methods focus on scalar PDEs on structured meshes. 
When applied to realistic engineering problems, they still face several key challenges, including handling arbitrary meshes on complex geometries, extending naturally to strongly coupled vector-valued PDE systems, and establishing rigorous theoretical convergence guarantees. 
These issues remain major obstacles to practical deployment.

In this work, we address these limitations by combining graph neural networks (GNNs) with Fourier-based correction in a unified hybrid framework. The resulting approach supports arbitrary mesh topologies and both scalar and vector-valued PDE systems, while retaining the stability structure of classical iterations. Under standard assumptions, we also establish mesh-independent convergence bounds. The main contributions are as follows:
\begin{enumerate}
\item \textbf{A graph-based neural solver family for arbitrary meshes.}
We develop a progressive sequence of models: G-FNS replaces grid-dependent CNN/FNO components with GNN-based operators; AG-FNS introduces an adaptive coordinate mapping for learnable spectral bases; ML-AG-FNS adds multilevel frequency decomposition to improve multiscale error reduction.

\item \textbf{A convergence theory for the hybrid iteration.}
For symmetric, bounded, and coercive bilinear forms, we derive mesh-independent bounds for the hybrid error-propagation operator, establishing robustness across discretization levels.

\item \textbf{Comprehensive validation on challenging PDE benchmarks.}
Experiments on anisotropic diffusion and isotropic/anisotropic elasticity show that the proposed methods improve robustness and efficiency, particularly on unstructured meshes and high-dimensional coupled systems, and compare favorably with SA-AMG baselines.
\end{enumerate}

The remainder of the paper is organized as follows. Section 2 presents the proposed solver architectures. Section 3 gives the convergence analysis. Section 4 reports numerical experiments. Section 5 concludes the paper and outlines future directions.

\section{Methods}\label{sec:method}

Traditional simple iterative methods (e.g., the weighted block Jacobi method) are usually effective at rapidly damping certain high-frequency errors, but they are inefficient in eliminating low-frequency errors. In contrast, deep neural networks exhibit a so-called “frequency bias” property \cite{rahaman2019spectral,xu2019frequency}, which makes them more effective at capturing and correcting global low-frequency information. Motivated by this complementarity, we consider the following deep learning-based hybrid iterative scheme (Deep Learning-based Hybrid Iterative Method, DL-HIM):
\begin{subequations}\label{eq:hybrid}
\begin{align}
\text{Smoothing step:} \quad & \cu{u}^{(k+\frac{1}{2})} = \cu{u}^{(k)} + \cu{B}\bigl(\cu{f} - \cu{A}\cu{u}^{(k)}\bigr), \label{eq:hybrid1} \\
\text{Correction step:} \quad & \cu{u}^{(k+1)} = \cu{u}^{(k+\frac{1}{2})} + \mathcal{H}\bigl(\cu{f} - \cu{A}\cu{u}^{(k+\frac{1}{2})}\bigr). \label{eq:hybrid2}
\end{align}
\end{subequations}

Here, $\cu{B}$ denotes a classical smoothing operator (smoother), while $\mathcal{H}$ represents a global correction operator implemented by a neural network.
Within this framework, FNS \cite{cui2025hybrid} provides a specific construction of $\mathcal{H}$:
\begin{equation}\label{eq:H}
\mathcal{H} \approx \cu{Q}\Lambda^{-1}\cu{Q}^{*} \approx \mathcal{F}^{-1}\,\mathcal{C}^{*}\,\widetilde{\Lambda}\,\mathcal{C}\,\mathcal{F},
\end{equation}
where $\mathcal{F}$ and $\mathcal{F}^{-1}$ denote the fast Fourier transform and its inverse, respectively. The matrix $\widetilde{\Lambda}$ is a learnable diagonal matrix used to approximate the inverse of the eigenvalue matrix, and $\mathcal{C}$ is a learnable sparse matrix that approximates the transition matrix $\cu{T}$ between the Fourier basis and the true eigenbasis. The notation $\mathcal{C}^{*}$ denotes its conjugate transpose.

Although FNS achieves promising results for various diffusion equations, its implementation within Eq.~\eqref{eq:H} has certain limitations. 
In particular, the transition matrix $\mathcal{C}$ is parameterized by a convolutional neural network (CNN), and the eigenvalue matrix $\widetilde{\Lambda}$ is generated by a Fourier Neural Operator (FNO).  Both components rely on regular grids and are difficult to apply to strongly coupled PDE systems. 

To overcome these limitations, we progressively extend FNS in three stages.  First, in Section~\ref{sec:gfns}, we replace the CNN and FNO with GNNs, enabling the operator construction to be independent of regular grid structures while introducing a block formulation, resulting in G-FNS. Next, in Section~\ref{sec:agfns}, to address the coupling effects between components in vector-valued PDEs, we introduce adaptive basis functions at the operator construction level, leading to AG-FNS. Finally, in Section~\ref{sec:mlagfns}, we further construct a multi-level solver in the frequency domain, termed ML-AG-FNS.

\subsection{Graph Fourier Neural Solver}\label{sec:gfns}

Graph neural networks(GNNs) operate naturally on graph-structured data and can aggregate both local and global information through message passing, making them well suited for unstructured meshes. 
Motivated by this property, we propose a graph-based extension of FNS, termed the graph Fourier neural solver (G-FNS). 
In this framework, the two subnetworks in FNS are replaced by graph neural networks that predict $\mathcal{C}$ and $\widetilde{\Lambda}$, enabling the construction of the correction operator $\mathcal{H}$ on arbitrary mesh discretizations.

\paragraph{Network architecture}

Specifically, when the input consists of global parameters, we employ a multilayer perceptron (MLP). 
For spatially distributed inputs, we adopt the GNN architecture shown in Fig.~\ref{fig:metaT} (Meta-T and Meta-$\lambda$) to predict $\mathcal{C}$ and $\widetilde{\Lambda}$. 
The network first lifts the input to a hidden dimension $d_1$ through an MLP, followed by three GCNConv \cite{kipf2016semi} + Linear layers to extract graph features. 
After global mean pooling, a final MLP maps the graph representation to the outputs, which are reshaped into $\mathcal{C}$ and $\widetilde{\Lambda}$.

\begin{figure}[htbp]
    \centering
    \includegraphics[width=1.0\linewidth]{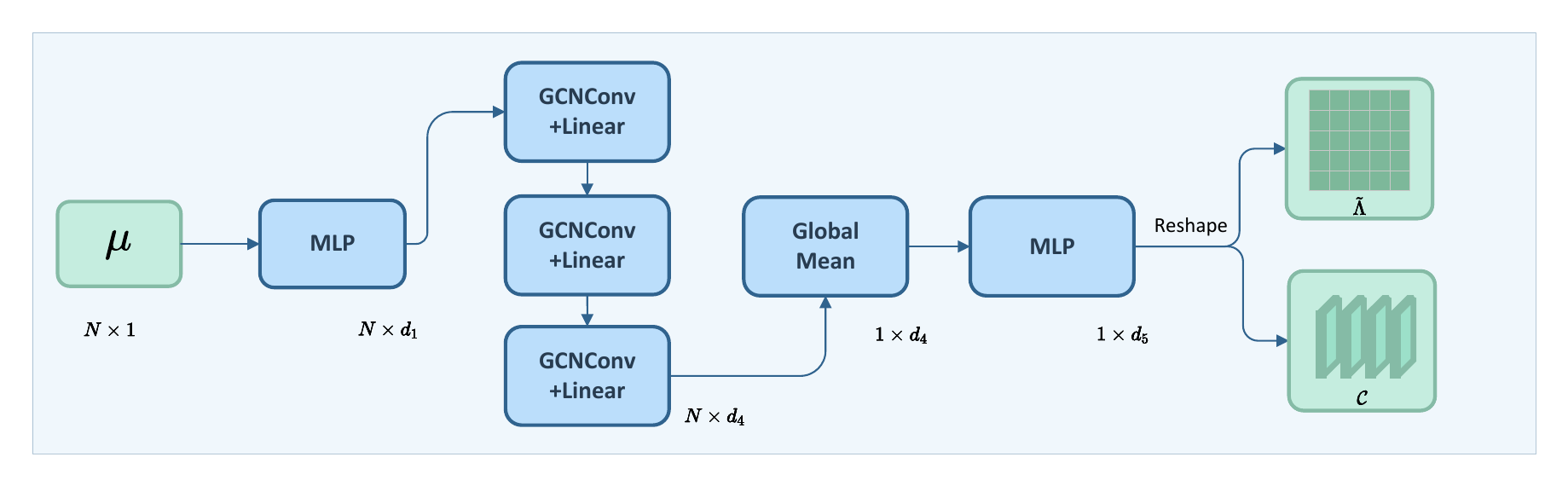}
    \caption{Schematic diagram of the Meta-network architecture (Meta-$\lambda$ or Meta-T). The input parameter  $\mu$ is first encoded by the Left MLP Module  and processed through stacked GCNConv+Linear blocks to extract spatial features. These node-level features are aggregated via Global Mean pooling into a global vector, which is subsequently mapped by the Right MLP Module and reshaped to generate the target frequency-domain operator parameters (the sparse matrix $\tilde{\Lambda}$ or  $\mathcal{C}$). }
    \label{fig:metaT}
\end{figure}

\paragraph{Matrix-vector multiplication on graphs}

The fundamental operation in solving linear systems is matrix–vector multiplication. 
In FNS, this operation is implemented via convolution based on the stencil structure of the coefficient matrix.  In our setting, however, the computation must be performed directly on graph representations.
A graph-based formulation for matrix-vector multiplication was proposed in \cite{moore2025graph}.  However, directly applying this method to vector-valued PDEs may destroy the topology of the physical space and the coupling information between components. To address this issue, we extend the method using a block representation.

Specifically, each nonzero stiffness matrix block $\cu{A}_{ij}\in\mathbb{R}^{s\times s}$ is flattened in row-major order and encoded as an edge feature
\[
\cu{e}_{ij}=\mathrm{Flatten}(\cu{A}_{ij})\in\mathbb{R}^{s^2},
\]
where $s$ denotes the number of degrees of freedom per node.
The matrix-vector multiplication $\cu{w}=\cu{A}\cu{v}$ is then performed as follows:
\begin{itemize}

\item[S1.] \textbf{Feature expansion:}

The source node feature $\cu{v}_j\in\mathbb{R}^s$ is replicated $s$ times and concatenated to form an expanded feature $\cu{v}'_j\in\mathbb{R}^{s^2}$:
\[
\cu{v}'_j = \mathrm{Concat}(\underbrace{\cu{v}_j,\ldots,\cu{v}_j}_{s\ \text{times}}).
\]

\item[S2.] \textbf{Message computation:}

For each edge $(i,j)$, the message is computed using the Hadamard product between the edge feature $\cu{e}_{ij}$ and the expanded node feature $\cu{v}'_j$:
\[
\cu{m}_{ij} = \cu{e}_{ij} \odot \cu{v}'_j \in \mathbb{R}^{s^2}.
\]

This operation simultaneously computes all element-wise products required for the matrix multiplication $\cu{A}_{ij}\cu{v}_j$.

\item[S3.] \textbf{Message aggregation:}

For each node $i$, the messages from neighboring nodes are aggregated:
\[
\cu{h}_i = \sum_{j \in \mathcal{N}(i)} \cu{m}_{ij} \in \mathbb{R}^{s^2}.
\]

\item[S4.] \textbf{Result reconstruction:}

The vector $\cu{h}_i$ is interpreted as $s$ blocks of length $s$. Summing the elements within each block yields the final result:
\[
(\cu{w}_i)_k = \sum_{p=1}^{s} (\cu{h}_i)_{(k-1)s+p}, \quad k=1,\dots,s.
\]

\end{itemize}

Through this process, block-structured linear algebra operations are fully mapped into message passing on graphs.

In summary, we extend FNS to G-FNS for arbitrary mesh structures. Numerical experiments (see Section~\ref{sec:exp}) show that G-FNS achieves faster convergence than FNS for anisotropic diffusion problems, demonstrating the effectiveness of replacing CNNs with GNNs. However,  for linear elasticity systems,  the convergence performance of G-FNS is still unsatisfactory. This indicates that merely replacing the network architecture is insufficient to handle the coupling effects between components in vector-valued PDEs, and further improvements at the operator construction level are required.

\subsection{Adaptive Graph Fourier Neural Solver}\label{sec:agfns}

Although G-FNS removes the dependence on regular grids by introducing GNNs, the Fourier transform in the correction operator $\mathcal{H}$ still acts on fixed physical coordinates. Consequently, the basis functions $e^{i\mathbf{k}\cdot\mathbf{x}_l}$ are entirely determined by the mesh geometry.
For complex problems with strong heterogeneity or anisotropy , the multiscale structures of the solution often do not align with the uniform partition of the physical coordinates. As a result, fixed Fourier bases may fail to adequately represent these features within a limited frequency truncation, leading to insufficient correction of errors.
To address this issue, we introduce a learnable coordinate mapping network $\mathcal{M}$ that transforms the physical coordinates $\mathbf{x}$ into adaptive coordinates $\boldsymbol{\xi}$. The resulting solver is referred to as the adaptive graph Fourier neural solver (AG-FNS).

\paragraph{Adaptive basis functions}

The coordinate mapping network transforms the physical coordinates $\mathbf{x}$ into adaptive coordinates $\boldsymbol{\xi}$:
\begin{equation}\label{eq:M}
\mathcal{M}(\mathbf{x}) = \mathbf{x} + \mathbf{x} \odot f_\theta(\mathbf{x}),
\end{equation}
where $f_\theta$ is a learnable fully connected network.
The mapping $\mathcal{M}$ adopts a residual formulation, which learns a local correction relative to the original coordinates. On one hand, this ensures that $\boldsymbol{\xi}(\mathbf{x})\approx \mathbf{x}$ at the early stage of training, improving training stability. On the other hand, it allows the network to focus on learning local coordinate shifts, thereby reducing optimization difficulty.

By substituting the adaptive coordinates into the Fourier transform, the standard basis function $e^{i\mathbf{k}\cdot\mathbf{x}_l}$ is replaced by the adaptive basis function $e^{i\mathbf{k}\cdot\boldsymbol{\xi}_l}$:
\begin{equation}\label{eq:Fourier-M-hat}
\hat{\mathbf{r}}^{(i)}(\mathbf{k}) =
\sum_{l=1}^{N}\mathbf{r}_l^{(i)}\,
e^{i\mathbf{k}\cdot\boldsymbol{\xi}_l},\quad
\mathbf{k}\in[-m,\ldots,0,\ldots,m]^d,
\end{equation}
where $m$ denotes the truncation frequency parameter, $d$ denotes the spatial dimension of the problem.

\paragraph{Correction operator of AG-FNS}

Based on the adaptive Fourier transform Eq.~\eqref{eq:Fourier-M-hat}, the correction operator becomes
\begin{equation}\label{eq:AG-FNS}
\mathcal{H} =
\mathcal{F}^{-1}(\boldsymbol{\xi})\,
\mathcal{C}^*\,\widetilde{\Lambda}\,\mathcal{C}\,
\mathcal{F}(\boldsymbol{\xi}).
\end{equation}

Compared with Eq.~\eqref{eq:H}, the Fourier transform here acts on the learnable adaptive coordinates rather than the fixed physical coordinates. This upgrades the frequency-domain correction from “parameter learning on fixed bases” to “joint learning of basis functions and parameters,” enabling the model to dynamically adapt the frequency-domain representation according to the characteristics of the problem. This constitutes the key improvement in representational capability of AG-FNS over G-FNS.
The complete computational workflow and training procedure are illustrated in Fig.~\ref{fig:AG-FNS} and Algorithm~\ref{alg:AG-FNS}.

\begin{figure}[t]
    \centering
    \includegraphics[width=1.0\linewidth]{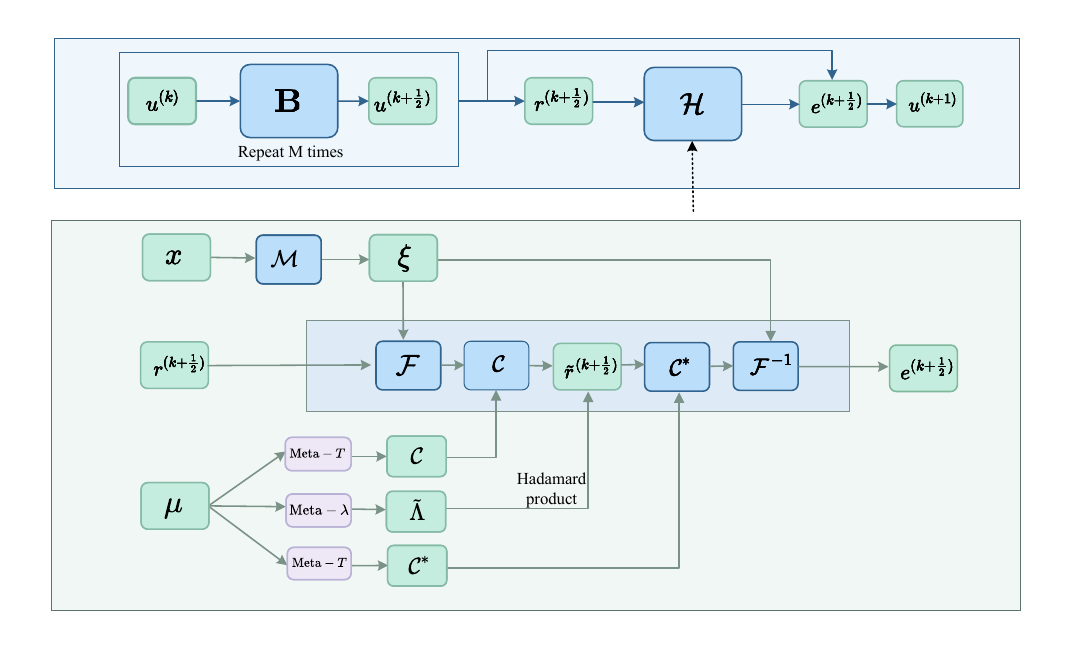}
     \caption{Schematic diagram of the AG-FNS workflow. The \textbf{upper panel} depicts the hybrid iterative framework: the current iterate $\cu{u}^{(k)}$ is first smoothed by the operator $\cu{B}$ (repeated $M$ times) to obtain $\cu{u}^{(k+\frac{1}{2})}$. The residual $\cu{r}^{(k+\frac{1}{2})}$ is then processed by the neural operator $\mathcal{H}$ to predict the error correction $\cu{e}^{(k+\frac{1}{2})}$, yielding the updated solution $\cu{u}^{(k+1)}$. The \textbf{lower panel} details the internal architecture of $\mathcal{H}$: it employs meta-networks (Meta-T and Meta-$\lambda$) that take problem parameters $\cu{\mu}$ as input to dynamically generate the transition matrices $\mathcal{C}, \mathcal{C}^*$ and the diagonal filter $\tilde{\Lambda}$. 
     The adaptive coordinates $\cu{\xi}$ are generated by the learnable network $\mathcal{M}$, which directly shapes the Fourier bases in $\mathcal{F}$ and $\mathcal{F}^{-1}$. The error $\cu{e}^{(k+\frac{1}{2})}$ is then computed through the subsequent operator sequence.}
    \label{fig:AG-FNS}
\end{figure}

\begin{algorithm}[ht]
\caption{AG-FNS Training} \label{alg:AG-FNS}
\begin{algorithmic}[1]
\REQUIRE Training data $\{(\mathbf{A}_i, \mathbf{f}_i, \mathbf{x}_i, \mu_i)\}_{i=1}^{N_{\mathrm{train}}}$,
         learning rate $\eta$, epochs $N_{\mathrm{epochs}}$,
         parameters $K$, $M$, $\omega = 2/3$, frequency modes $m$
\FOR{epoch $= 1, \ldots, N_{\mathrm{epochs}}$}
    \FOR{each batch $\{(\mathbf{A}_i, \mathbf{f}_i, \mathbf{x}_i, \mu_i)\}$}
        \FOR{each training sample $(\mathbf{A}_i, \mathbf{f}_i, \mathbf{x}_i, \mu_i)$ in the batch}
            \STATE $\tilde{\Lambda}_i \leftarrow \mathrm{Meta\text{-}}\lambda(\mu_i)$, \quad
                   $\mathcal{C}_i,\, \mathcal{C}_i^* \leftarrow \mathrm{Meta\text{-}T}(\mu_i)$
                   \COMMENT{learned freq-domain scaling and transforms}
            \STATE $\boldsymbol{\xi}_i \leftarrow \mathcal{M}(\mathbf{x}_i)$
                   \COMMENT{learn adaptive coordinates}
            \STATE $\mathbf{u}^{(0)} \leftarrow \mathbf{0}$
            \FOR{$k = 0, 1, \ldots, K-1$}
                \STATE $\mathbf{u}^{(k+\frac{1}{2})} \leftarrow \mathrm{Jacobi}\!\left(\mathbf{A}_i,\, \mathbf{f}_i,\, \mathbf{u}^{(k)},\, \omega,\, \mathrm{iter}=M\right)$
                       \COMMENT{smoothing}
                \STATE $\mathbf{r}^{(k+\frac{1}{2})} \leftarrow \mathbf{f}_i - \mathbf{A}_i\,\mathbf{u}^{(k+\frac{1}{2})}$
                       \COMMENT{compute residual}
                \STATE $\mathbf{e}^{(k+\frac{1}{2})} \leftarrow
                       \mathcal{F}^{-1}(\boldsymbol{\xi}_i)\,
                       \mathcal{C}_i^*\,\tilde{\Lambda}_i\,\mathcal{C}_i\,
                       \mathcal{F}(\boldsymbol{\xi}_i)\,
                       \mathbf{r}^{(k+\frac{1}{2})}$
                        \COMMENT{adaptive freq-domain correction with $m$}
                \STATE $\mathbf{u}^{(k+1)} \leftarrow \mathbf{u}^{(k+\frac{1}{2})} + \mathbf{e}^{(k+\frac{1}{2})}$
            \ENDFOR
        \ENDFOR
        \STATE compute the loss $\mathcal{L}$ according to Eq.~\eqref{eq:loss}
        \STATE $\nabla_{\Theta} \leftarrow \nabla \mathcal{L}$
               \COMMENT{$\Theta$: parameters of $\mathcal{M}$, Meta-$\lambda$, Meta-T}
        \STATE Update $\Theta$ using Adam optimizer
    \ENDFOR
\ENDFOR
\ENSURE Trained model AG-FNS
\end{algorithmic}
\end{algorithm}

Although AG-FNS achieves promising results for two-dimensional linear elasticity problems with spatially varying Young’s modulus, demonstrating the effectiveness of the adaptive Fourier basis, its performance becomes more challenging as the problem dimension increases(see Section~\ref{sec:exp}). 
In particular, the correction operator $\mathcal{H}$ associated with a single frequency truncation parameter $m$ cannot simultaneously capture error components across different scales, which limits the convergence performance . 
This limitation motivates further improvements of the proposed method.

\subsection{Multi-Level Fourier Neural Solver in the Frequency Domain}\label{sec:mlagfns}

In the single-level AG-FNS, the correction operator $\mathcal{H}$ operates under a single frequency truncation $m$. If $m$ is chosen large, the number of parameters increases significantly while the ability to correct low-frequency errors remains limited. Conversely, if $m$ is small, high-frequency details cannot be sufficiently captured.

Inspired by the idea of multigrid methods\cite{trottenberg2000multigrid}, which eliminate errors across all frequencies through hierarchical coarsening in the physical domain , we extend the single correction operator in AG-FNS to a multi-level structure and construct the multi-level adaptive graph Fourier neural solver (ML-AG-FNS).

Specifically, the correction operator at level $i$ is defined as
\[
\mathcal{H}_i =
\mathcal{F}_i^{-1}(\boldsymbol{\xi})\,
\mathcal{C}_i^*\,\widetilde{\Lambda}_i\,\mathcal{C}_i\,
\mathcal{F}_i(\boldsymbol{\xi}), \qquad i=1,\ldots,L,
\]
where each level adopts the same form as in Eq.~\eqref{eq:AG-FNS}, but with frequency truncation levels satisfying
\[
m_L < \cdots < m_2 < m_1.
\]
The shallow levels (with larger $m_i$) primarily target high-frequency errors, while the deeper levels (with smaller $m_i$) focus on low-frequency errors. Each level is equipped with independent Meta-$\lambda_i$ and Meta-T$_i$ networks that generate $\widetilde{\Lambda}_i$, $\mathcal{C}_i$, and $\mathcal{C}_i^*$.
The coordinate mapping network $\mathcal{M}$ is shared across all levels, i.e.,
\(\boldsymbol{\xi} = \mathcal{M}(\mathbf{x})\)
is identical for all $L$ levels. This design avoids parameter redundancy that would arise from independent coordinate mappings for each level, while ensuring that all levels operate within the same adaptive coordinate system.

During each correction step, the operators at each level are applied in sequence. At level $i$, the current residual is first computed and processed by the frequency-domain correction operator to produce a preliminary error correction. Then, $M$ Jacobi smoothing steps are performed to damp high-frequency components introduced during the correction. The resulting residual is then passed to the next level. Finally, the corrections from all levels are accumulated to update the solution:
\begin{equation}\label{eq:ml_update}
\mathbf{u}^{(k+1)} =
\mathbf{u}^{(k+\frac{1}{2})} +
\sum_{i=1}^{L}\mathbf{e}_i.
\end{equation}

This residual propagation mechanism is logically analogous to the coarse-grid correction in the multigrid. However, the effectiveness of AMG heavily on the choice of the aggregation strategy. In contrast, our approach performs hierarchical error correction directly in the frequency domain according to the frequency truncation parameter, without explicitly constructing coarse grids or designing aggregation strategies.  This provides a simpler and more implementable mechanism for multi-scale error elimination.
The complete implementation is given in Algorithm~\ref{alg:ML-AG-FNS}.

Overall, ML-AG-FNS combines multi-level frequency decomposition, shared adaptive basis functions, and layer-wise residual propagation. While retaining the ability of AG-FNS to handle arbitrary meshes, it extends the spectral coverage of the correction operator from a single scale to the full frequency spectrum, thereby significantly improving the robustness of the solver for high-dimensional and complex problems.

\begin{algorithm}[ht]
\caption{ML-AG-FNS} \label{alg:ML-AG-FNS}
\begin{algorithmic}[1]
\REQUIRE System matrix $\mathbf{A}$, right-hand side $\mathbf{f}$, coordinates $\mathbf{x}$,
         PDE parameters $\mu$; parameters $K$, $M$, $\omega=2/3$,
         number of levels $L$, frequency modes $\{m_i\}_{i=1}^{L}$ with $m_L < \cdots < m_1$
\FOR{$i = 1, \ldots, L$}
    \STATE $\tilde{\Lambda}_i \leftarrow \mathrm{Meta\text{-}}\lambda_i(\mu)$, \quad
           $\mathcal{C}_i,\, \mathcal{C}_i^* \leftarrow \mathrm{Meta\text{-}T}_i(\mu)$
           \COMMENT{level-$i$ freq-domain scaling and transforms}
\ENDFOR
\STATE $\boldsymbol{\xi} \leftarrow \mathcal{M}(\mathbf{x})$
       \COMMENT{learn adaptive coordinates, shared across all levels}
\STATE $\mathbf{u}^{(0)} \leftarrow \mathbf{0}$
\FOR{$k = 0, 1, \ldots, K-1$}
    \STATE $\mathbf{u}^{(k+\frac{1}{2})} \leftarrow \mathrm{Jacobi}\!\left(\mathbf{A},\,\mathbf{f},\,\mathbf{u}^{(k)},\,\omega,\,\mathrm{iter}=M\right)$
           \COMMENT{smoothing}
    \STATE $\tilde{\mathbf{u}} \leftarrow \mathbf{u}^{(k+\frac{1}{2})}$, \quad $\tilde{\mathbf{f}} \leftarrow \mathbf{f}$
           \COMMENT{initialize multi-level inputs}
    \FOR{$i = 1, \ldots, L$}
        \STATE $\mathbf{r}_i \leftarrow \tilde{\mathbf{f}} - \mathbf{A}\,\tilde{\mathbf{u}}$
               \COMMENT{compute level-$i$ residual}
        \STATE $\mathbf{e}_i \leftarrow \mathcal{F}_i^{-1}(\boldsymbol{\xi})\,
               \mathcal{C}_i^*\,\tilde{\Lambda}_i\,\mathcal{C}_i\,
               \mathcal{F}_i(\boldsymbol{\xi})\,\mathbf{r}_i$
               \COMMENT{level-$i$ adaptive freq-domain correction with modes $m_i$}
         \STATE $\mathbf{e}_i \leftarrow \mathrm{Jacobi}\!\left(\mathbf{A},\,\mathbf{r}_i,\,\mathbf{e}_i,\,\omega,\,\mathrm{iter}=M\right)$
               \COMMENT{smoothing on level-$i$ correction}
        \STATE $\tilde{\mathbf{u}} \leftarrow \mathbf{e}_i$, \quad $\tilde{\mathbf{f}} \leftarrow \mathbf{r}_i$
               \COMMENT{pass to next level}
        \STATE $\mathbf{u}^{(k+\frac{1}{2})} \leftarrow \mathbf{u}^{(k+\frac{1}{2})} + \mathbf{e}_i$
               \COMMENT{accumulate correction}
    \ENDFOR
    \STATE $\mathbf{u}^{(k+1)} \leftarrow \mathbf{u}^{(k+\frac{1}{2})}$
\ENDFOR
\ENSURE $\mathbf{u}^{(K)}$
\end{algorithmic}
\end{algorithm}

\subsection{Loss Function}

To reduce the cost of obtaining training data, all three architectures described above are trained using an unsupervised loss function:
\begin{equation} \label{eq:loss}
\mathcal{L} =
\frac{1}{N_b}
\sum_{i=1}^{N_b}
\frac{\|\cu f_i - \cu A_i \cu u_i^{(K)}\|}{\|\cu f_i\|}.
\end{equation}

Here, $N_b$ denotes the batch size. The matrices $\cu{A}_i$ and vectors $\cu{f}_i$ represent the system matrix and right-hand side corresponding to the $i$-th sample, respectively. The vector $\cu{u}_i^{(K)}$ denotes the approximate solution obtained after $K$ hybrid iterations (Eq.~\eqref{eq:hybrid}) starting from a zero initial guess.

This loss function measures the relative residual produced when the predicted solution is substituted into the discrete system. In practice, the number of iterations $K$ must be carefully balanced with computational and memory costs during training.

\section{Convergence Analysis}\label{sec:FNS}

To theoretically justify the effectiveness of the proposed hybrid iterative method, we analyze the error propagation properties of the hybrid iteration Eq.~\eqref{eq:hybrid}, which consists of a weighted block Jacobi smoothing operator $\cu{B}$ and a neural correction operator $\mathcal{H}$. The error propagation operator corresponding to one full iteration reads
\begin{equation}\label{eq:error_prop}
    \mathcal{E} = (I - \mathcal{H}\mathcal{A})(I - \cu{B}\mathcal{A}).
\end{equation}

Our analysis is conducted within the energy-norm framework, which is suitable for symmetric positive definite elliptic problems.

Let $\mathcal{V}$ be a real Hilbert space. Consider the variational problem: find $u \in \mathcal{V}$ such that
\begin{equation}\label{eq:variational}
    a(u,v) = \ell(v), \qquad \forall\, v \in \mathcal{V},
\end{equation}
where $\ell(\cdot)$ is a bounded linear functional and $a(\cdot,\cdot)$ is a symmetric bilinear form satisfying the continuity and coercivity conditions: there exist constants $\gamma,\alpha>0$ such that
\begin{equation}\label{eq:coercivity}
    |a(u,v)| \le \gamma \|u\|_{\mathcal{V}}\|v\|_{\mathcal{V}},
    \qquad
    a(v,v) \ge \alpha \|v\|_{\mathcal{V}}^{2}.
\end{equation}
By the Lax--Milgram theorem, problem Eq.~\eqref{eq:variational} admits a unique solution.

The symmetry and positive definiteness of $a(\cdot,\cdot)$ induce an inner product on $\mathcal{V}$ and thus define the energy norm
\[
\|v\|_a := \sqrt{a(v,v)} .
\]

Under this energy inner product, we assume that the space admits the orthogonal decomposition
\begin{equation}\label{eq:decomp}
    \mathcal{V} = \Theta^{\cu{B}} \oplus_{\perp_a} \Theta^{\mathcal{H}},
\end{equation}
where $\Theta^{\cu{B}}$ and $\Theta^{\mathcal{H}}$ represent the high-frequency and low-frequency subspaces, respectively. For any $e\in\mathcal{V}$, its decomposition $e = e_{\cu{B}} + e_{\mathcal{H}}$ satisfies
\begin{equation}\label{eq:pythagoras}
    \|e\|_{a}^{2} = \|e_{\cu{B}}\|_{a}^{2} + \|e_{\mathcal{H}}\|_{a}^{2}.
\end{equation}

We now impose assumptions on the error reduction properties of the two operators over their corresponding subspaces.

\begin{assumption}[Smoothing property]\label{def:B}
There exists a constant $\rho\in[0,1)$ such that
\begin{align}
    \|(I - \cu{B}\mathcal{A})\mathbf{v}\|_a 
    &\le \rho\, \|\mathbf{v}\|_a, 
    && \forall\, \mathbf{v} \in \Theta^{\cu{B}}, 
    \label{eq:B-high} \\
    \|(I - \cu{B}\mathcal{A})\mathbf{v}\|_a 
    &\le \|\mathbf{v}\|_a, 
    && \forall\, \mathbf{v} \in \Theta^{\mathcal{H}}.
    \label{eq:B-low}
\end{align}

That is, $\cu{B}$ effectively reduces high-frequency errors in $\Theta^{\cu{B}}$, while remaining stable on the low-frequency subspace $\Theta^{\mathcal{H}}$.
\end{assumption}

For the neural operator $\mathcal{H}$, we take into account the potential high-frequency leakage (mode mixing) that may arise when approximating variable-coefficient operators.

\begin{assumption}[Neural correction property]\label{ass:H}

There exists constants $\delta \ll 1$ and $C \ge 1$ such that
\[
\|(I-\mathcal{H}\mathcal{A})\mathbf{v}\|_{a}
\le \delta \|\mathbf{v}\|_{a},
\qquad
\forall \mathbf{v}\in\Theta^{\mathcal{H}} ,
\]
\[
\|(I-\mathcal{H}\mathcal{A})\mathbf{v}\|_{a}
\le C \|\mathbf{v}\|_{a},
\qquad
\forall \mathbf{v}\in\Theta^{\cu{B}} .
\]
That is, $\mathcal{H}$ effectively reduces low-frequency errors in $\Theta^{\mathcal{H}}$, while remaining bounded on $\Theta^{\cu{B}}$.
\end{assumption}

\begin{remark}
    The constant $C$ characterizes the worst-case amplification factor of the neural operator on high-frequency components; no contraction is required.
\end{remark}
Based on these assumptions, we establish the following global convergence result.

\begin{theorem}[Global convergence]
Let $\cu{e}^{(0)}$ be the initial error. The error after one hybrid iteration, $\cu{e}^{(1)} = \mathcal{E} \cu{e}^{(0)}$, satisfies
\begin{equation}
    \|\cu{e}^{(1)}\|_{a} \le \gamma \|\cu{e}^{(0)}\|_{a}, \qquad \text{where } \gamma = \sqrt{2}\max(\delta,\,C \rho).
\end{equation}
The hybrid iteration converges unconditionally provided the smoothing-boundedness condition $C \rho < \frac{\sqrt{2}}{2}$ holds.
\end{theorem}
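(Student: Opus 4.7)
The plan is to analyse the composite error propagation operator $\mathcal{E}=(I-\mathcal{H}\mathcal{A})(I-\mathcal{B}\mathcal{A})$ by propagating the orthogonal decomposition in \eqref{eq:orthogonality} through the two stages and tracking the component-wise $\|\cdot\|_a$-norms. The constant $\gamma=\sqrt{2}\max(\delta,C\rho)$ has a transparent origin: the factor $\sqrt{2}$ comes from the triangle inequality in the form $\|a+b\|_a^2\le 2(\|a\|_a^2+\|b\|_a^2)$, which must be invoked because $(I-\mathcal{H}\mathcal{A})$ need not preserve the $a$-orthogonality of the two subspace components; the factor $\max(\delta,C\rho)$ records the worst multiplicative effect that the smoother followed by the corrector has on either component.

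Concretely, I would proceed in three steps. First, write $\cu{e}^{(0)}=\cu{e}_{\mathcal{B}}^{(0)}+\cu{e}_{\mathcal{H}}^{(0)}$ according to $\cu{V}=\Theta^{\mathcal{B}}\oplus_{\perp_a}\Theta^{\mathcal{H}}$, so that $\|\cu{e}^{(0)}\|_a^2=\|\cu{e}_{\mathcal{B}}^{(0)}\|_a^2+\|\cu{e}_{\mathcal{H}}^{(0)}\|_a^2$. Second, apply the smoother and, using the invariance of the two subspaces under $(I-\mathcal{B}\mathcal{A})$ (which is intrinsic to how $\Theta^{\mathcal{B}}$ and $\Theta^{\mathcal{H}}$ are identified in Definition~\ref{def:B} from the spectral action of $\mathcal{B}\mathcal{A}$), set
\begin{equation*}
\cu{w}_{\mathcal{B}}:=(I-\mathcal{B}\mathcal{A})\cu{e}_{\mathcal{B}}^{(0)}\in\Theta^{\mathcal{B}},\qquad \cu{w}_{\mathcal{H}}:=(I-\mathcal{B}\mathcal{A})\cu{e}_{\mathcal{H}}^{(0)}\in\Theta^{\mathcal{H}},
\end{equation*}
so that \eqref{eq:B-high}--\eqref{eq:B-low} give $\|\cu{w}_{\mathcal{B}}\|_a\le\rho\|\cu{e}_{\mathcal{B}}^{(0)}\|_a$ and $\|\cu{w}_{\mathcal{H}}\|_a\le\|\cu{e}_{\mathcal{H}}^{(0)}\|_a$. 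Third, apply the corrector. Since $\cu{e}^{(1)}=(I-\mathcal{H}\mathcal{A})\cu{w}_{\mathcal{B}}+(I-\mathcal{H}\mathcal{A})\cu{w}_{\mathcal{H}}$, the bound $\|a+b\|_a^2\le 2(\|a\|_a^2+\|b\|_a^2)$ combined with \eqref{eq:Hlow}--\eqref{eq:Hhigh} yields
\begin{equation*}
\|\cu{e}^{(1)}\|_a^2\le 2\bigl(C^2\|\cu{w}_{\mathcal{B}}\|_a^2+\delta^2\|\cu{w}_{\mathcal{H}}\|_a^2\bigr)\le 2\bigl((C\rho)^2\|\cu{e}_{\mathcal{B}}^{(0)}\|_a^2+\delta^2\|\cu{e}_{\mathcal{H}}^{(0)}\|_a^2\bigr).
\end{equation*}
Factoring out $\max((C\rho)^2,\delta^2)$ and using the orthogonal Pythagorean identity recovers $\|\cu{e}^{(1)}\|_a\le \sqrt{2}\max(\delta,C\rho)\|\cu{e}^{(0)}\|_a$. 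Unconditional convergence then follows from requiring $\gamma<1$, which, combined with $\delta\ll 1$ already granted by Assumption~\ref{ass:H}, reduces to $C\rho<\tfrac{\sqrt{2}}{2}$.

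The main obstacle is the invariance step: Definition~\ref{def:B} and Assumption~\ref{ass:H} state inequalities only for vectors already lying in $\Theta^{\mathcal{B}}$ or $\Theta^{\mathcal{H}}$, but not that $(I-\mathcal{B}\mathcal{A})$ maps each subspace into itself. I would make this explicit by interpreting $\Theta^{\mathcal{B}}$ and $\Theta^{\mathcal{H}}$ as the spectral invariant subspaces used to formulate the LFA separation between stiff and flexible modes, under which $\mathcal{B}\mathcal{A}$ acts block-diagonally; in the implementation-independent convergence statement this is either taken as part of Definition~\ref{def:B} or else one inserts orthogonal projectors $P_{\mathcal{B}},P_{\mathcal{H}}$ and absorbs their norm-one action into the same $\sqrt{2}$ triangle bookkeeping. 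A secondary but routine point is verifying that the $\sqrt{2}$ cannot be removed without an additional orthogonality hypothesis on $(I-\mathcal{H}\mathcal{A})$ across the decomposition; this is what fixes the sharp threshold $C\rho<\tfrac{\sqrt{2}}{2}$ stated in the theorem.
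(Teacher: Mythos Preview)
Your proposal is correct and follows essentially the same three-step structure as the paper's proof: orthogonal decomposition of $\cu{e}^{(0)}$, application of the smoother bounds \eqref{eq:B-high}--\eqref{eq:B-low} to each component, then the corrector bounds \eqref{eq:Hlow}--\eqref{eq:Hhigh}, with the $\sqrt{2}$ arising from combining two non-orthogonal terms. The only cosmetic difference is the placement of the $\sqrt{2}$: the paper first applies the ordinary triangle inequality $\|\cu{e}^{(1)}\|_a\le C\rho\|\cu{e}_{\mathcal{B}}^{(0)}\|_a+\delta\|\cu{e}_{\mathcal{H}}^{(0)}\|_a$ and then invokes $(x+y)\le\sqrt{2}\sqrt{x^2+y^2}$ at the end, whereas you use the squared form $\|a+b\|_a^2\le 2(\|a\|_a^2+\|b\|_a^2)$ up front---algebraically these are the same bound.

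You are right to flag the invariance issue as the main subtlety. The paper's proof implicitly relies on the same assumption (it applies \eqref{eq:Hhigh} to $\cu{r}_{\mathcal{B}}$ and \eqref{eq:Hlow} to $\cu{r}_{\mathcal{H}}$, which is only licensed if these vectors remain in $\Theta^{\mathcal{B}}$ and $\Theta^{\mathcal{H}}$ respectively), while remarking only that ``the operator action may disrupt orthogonality.'' Your explicit acknowledgment that this step requires either interpreting $\Theta^{\mathcal{B}},\Theta^{\mathcal{H}}$ as spectral invariant subspaces of $\mathcal{B}\mathcal{A}$ or absorbing projectors into the bookkeeping is, if anything, more careful than the paper's treatment.
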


\begin{proof}
By the orthogonal decomposition assumption, the initial error can be written as
\[
\mathbf{e}^{(0)}=\mathbf{e}^{(0)}_{\cu{B}}+\mathbf{e}^{(0)}_{\mathcal{H}} .
\]
By the definition of the orthogonal direct sum $\oplus_{\perp_a}$, the energy norm satisfies
\begin{equation}\label{eq:orth_init}
\|\mathbf{e}^{(0)}\|_a
=
\sqrt{
\|\mathbf{e}^{(0)}_{\cu{B}}\|_a^{2}
+
\|\mathbf{e}^{(0)}_{\mathcal{H}}\|_a^{2}
}.
\end{equation}

The analysis of the error propagation is carried out in three steps.
\begin{description}
    \item[S1. Smoothing Process]

Applying the smoother Eq.~\eqref{eq:hybrid1} yields the intermediate error $$\cu{e}^{(1/2)} = (I - \cu{B}\mathcal{A})\cu{e}^{(0)}.$$ By linearity, we split this into $\cu{r}_{\cu{B}} = (I - \cu{B}\mathcal{A})\cu{e}^{(0)}_{\cu{B}}$ and $\cu{r}_{\mathcal{H}} = (I - \cu{B}\mathcal{A})\cu{e}^{(0)}_{\mathcal{H}}$. From Definition \ref{def:B}, we have
\begin{equation}\label{eq:bound_smooth}
    \|\cu{r}_{\cu{B}}\|_a \le \rho\|\cu{e}^{(0)}_{\cu{B}}\|_a, \qquad 
    \|\cu{r}_{\mathcal{H}}\|_a \le \|\cu{e}^{(0)}_{\mathcal{H}}\|_a.
\end{equation}
\textit{Note: The operator action may introduce mode mixing (disrupting orthogonality), so we rely on the norm bounds of $\cu{r}_{\cu{B}}$ and $\cu{r}_{\mathcal{H}}$ generally in the next step.}
\item[S2. Correction Process]

Applying the neural operator Eq.~\eqref{eq:hybrid2} results in $\cu{e}^{(1)} = (I - \mathcal{H}\mathcal{A})\cu{e}^{(1/2)}$. Using the triangle inequality
\begin{align*}
    \|\cu{e}^{(1)}\|_a &= \|(I - \mathcal{H}\mathcal{A})\cu{r}_{\cu{B}} + (I - \mathcal{H}\mathcal{A})\cu{r}_{\mathcal{H}}\|_a \\
    &\le \|(I - \mathcal{H}\mathcal{A})\cu{r}_{\cu{B}}\|_a + \|(I - \mathcal{H}\mathcal{A})\cu{r}_{\mathcal{H}}\|_a.
\end{align*}
Invoking Assumption \ref{ass:H} (boundedness and accuracy) and the bounds from Eq.~\eqref{eq:bound_smooth}, we obtain
\begin{align*}
    \|\cu{e}^{(1)}\|_a &\le C \|\cu{r}_{\cu{B}}\|_a + \delta \|\cu{r}_{\mathcal{H}}\|_a \\
    &\le C \rho \|\cu{e}^{(0)}_{\cu{B}}\|_a + \delta \|\cu{e}^{(0)}_{\mathcal{H}}\|_a.
\end{align*}
\item[S3. Global Error Reassembly]

Let $\gamma' = \max(\delta, C\rho)$. Then $\|\cu{e}^{(1)}\|_a \le \gamma' ( \|\cu{e}^{(0)}_{\cu{B}}\|_a + \|\cu{e}^{(0)}_{\mathcal{H}}\|_a )$.
Using the inequality $(x+y) \le \sqrt{2}\sqrt{x^2+y^2}$ for $x,y \ge 0$, combined with the initial orthogonality, we have
\begin{align*}
    \|\cu{e}^{(1)}\|_a &\le \gamma' \sqrt{2} \sqrt{\|\cu{e}^{(0)}_{\cu{B}}\|_a^2 + \|\cu{e}^{(0)}_{\mathcal{H}}\|_a^2} \\
    &= \sqrt{2} \max(\delta, C\rho) \|\cu{e}^{(0)}\|_a.
\end{align*}
\end{description}
\end{proof}

\begin{remark}[Energy norm and mesh independence]
Although the above theorem holds algebraically under any norm, the mesh-independence of the convergence constants $\rho$ and $C$ fundamentally relies on the use of the energy norm $\|\cdot\|_a$.

\begin{itemize}
    \item If the $L^2$ norm is adopted, the second-order elliptic operator $\mathcal{A}$ becomes unbounded with respect to this norm. As the mesh is refined, the smoothing factor $\rho$ may deteriorate toward $1$ (leading to convergence stagnation), while the amplification constant $C$ may grow without bound, causing the theoretical estimate to lose its validity.
    
    \item In contrast, under the energy norm, both the operator and its inverse remain bounded. As a consequence, the constants $\rho$ and $C$ remain independent of the mesh size $h$, which ensures the robustness of the method in the multigrid sense.
\end{itemize}
\end{remark}

\begin{remark}[Mode mixing and loss of orthogonality]
For variable-coefficient operators or irregular meshes, the action of the operator may couple different modal components (mode mixing). In particular, low-frequency errors may generate high-frequency components after the operator is applied. 
To accommodate this phenomenon, the proof employs the triangle inequality in the intermediate step rather than relying on the Pythagorean identity. This allows the analysis to remain valid even when the orthogonality between modal components is not preserved during the iteration.
\end{remark}

\section{Numerical Experiments}\label{sec:exp}

In this section, we evaluate the performance of the proposed method on two representative classes of PDEs: the scalar-valued anisotropic diffusion equation and the vector-valued linear elasticity equation. We begin by introducing the dataset construction and the corresponding theoretical formulations of the governing equations. Numerical results are then presented to demonstrate the effectiveness of the proposed solver in handling diverse types of PDEs and complex mesh topologies.

\subsection{Datasets}

\subsubsection{Anisotropic diffusion equation}

The anisotropic diffusion equation is a fundamental model for describing physical processes such as heat conduction in heterogeneous media, flow in porous materials, and electromagnetic field propagation \cite{saad2003iterative}. 
In this work, we consider the following boundary value problem defined on a bounded domain $\Omega \subset \mathbb{R}^d$ with $d=2$:
\begin{equation}\label{eq:anistropy}
\left\{
\begin{aligned}
-\nabla \cdot (C \nabla u) &= 1, \qquad \mathbf{x}\in \Omega,\\
u &= 0, \qquad \mathbf{x}\in \partial\Omega,
\end{aligned}
\right.
\end{equation}
where the right-hand side $\cu{f}$ is a given source term, and the diffusion tensor is given by
\begin{equation}\label{eq:ani_xishu}
C=
\left(\begin{array}{ll}c_1 & c_2 \\ c_3 & c_4\end{array}\right)
=
\left(\begin{array}{cc}\cos\theta & -\sin\theta \\ \sin\theta & \cos\theta\end{array}\right)
\left(\begin{array}{ll}1 & 0 \\ 0 & \epsilon\end{array}\right)
\left(\begin{array}{cc}\cos\theta & \sin\theta \\ -\sin\theta & \cos\theta\end{array}\right),
\end{equation}
where $0<\epsilon\le1$ controls the strength of anisotropy and $\theta\in[0,\pi]$ denotes the principal direction of anisotropy. When $\epsilon\ll1$, the problem becomes highly anisotropic, leading to severe ill-conditioning of the discretized system and increased difficulty for numerical solvers.

The computational domain is set to $\Omega=[0,1]^2$. Both structured and unstructured meshes (see Fig.~\ref{fig:sol}) are employed in order to evaluate the adaptability of the proposed method to different mesh topologies.
For training and evaluation, the parameters $\epsilon$ , $\theta$ and $\cu{f}$ are independently sampled according to
\begin{equation}
\epsilon\sim\mathcal{U}(10^{-6},1),\qquad
\theta\sim\mathcal{U}(0,\pi),\qquad
f\sim\mathcal{N}(0,1).
\end{equation}

The wide sampling range of $\epsilon$ spans six orders of magnitude, from nearly isotropic to strongly anisotropic cases, resulting in linear systems with vastly different condition numbers and enabling a comprehensive evaluation of solver robustness.

\subsubsection{Linear elasticity equation}

The linear elasticity model is a fundamental framework in solid mechanics \cite{atkin2013introduction,ting1996anisotropic,wang2008recent}, describing the response of elastic bodies under external forces. As a representative vector-valued PDE system, it exhibits strong coupling among displacement, strain, and stress fields, which poses challenges for the design of efficient and robust numerical solvers, especially in high-dimensional, heterogeneous, and complex geometries.

We consider the static linear elasticity problem defined on a bounded domain $\Omega\subset\mathbb{R}^d$ with $d=2$ or $3$. The displacement field $\boldsymbol{u}:\Omega\to\mathbb{R}^d$ satisfies
\begin{equation}\label{eq:ela}
\left\{
\begin{aligned}
-\nabla\cdot\boldsymbol{\sigma}(\mathbf{x}) &= \mathbf{f}(\mathbf{x}), 
&& \mathbf{x}\in\Omega,\\
\mathbf{u}(\mathbf{x}) &= \mathbf{0}, 
&& \mathbf{x}\in\Gamma_D,\\
\boldsymbol{\sigma}(\mathbf{x})\cdot\mathbf{n} &= \mathbf{t}(\mathbf{x}), 
&& \mathbf{x}\in\Gamma_N,\\
\boldsymbol{\sigma}(\mathbf{x})\cdot\mathbf{n} &= \mathbf{0}, 
&& \mathbf{x}\in\partial\Omega\setminus(\Gamma_D\cup\Gamma_N),
\end{aligned}
\right.
\end{equation}

where $\boldsymbol{f}$ denotes the body force, $\boldsymbol{t}$ the surface traction, and $\boldsymbol{n}$ the outward unit normal vector. The boundary is partitioned into disjoint subsets $\Gamma_D$ and $\Gamma_N$ with $\Gamma_D \cup \Gamma_N \subseteq \partial\Omega$.

The system is closed by standard kinematic and constitutive relations. Under the small deformation assumption, the strain tensor is given by
\begin{equation}\label{eq:epsilon}
\boldsymbol{\varepsilon}(\mathbf{x})
=
\frac{1}{2}\left(
\nabla\boldsymbol{u}
+
(\nabla\boldsymbol{u})^T
\right),
\end{equation}
and the stress tensor satisfies the generalized Hooke’s law
\begin{equation}\label{eq:hooke}
\boldsymbol{\sigma}(\mathbf{x})
=
\boldsymbol{C}(\mathbf{x})\,
\boldsymbol{\varepsilon}(\mathbf{x}),
\end{equation}
where $\boldsymbol{C}(\mathbf{x})$ is a fourth-order elasticity tensor that characterizes the directional elastic response of the material. In Voigt notation, it can be represented as matrices in $\mathbb{R}^{3\times3}$ and $\mathbb{R}^{6\times6}$ for two- and three-dimensional cases, respectively, with specific forms determined by the material elastic parameters.

To evaluate the generality and robustness of the proposed method, four datasets are constructed for the linear elasticity equation, covering two- and three-dimensional problems as well as isotropic and anisotropic material settings. 
We focus primarily on variable-coefficient cases, which are more challenging and representative than constant-coefficient problems.
\vskip 0.1cm
\noindent\textbf{Data-1: 2D isotropic elasticity with spatially varying Young's modulus}

The computational domain is $\Omega=[0,1]^2$, discretized using both structured and unstructured meshes (see Fig.~\ref{fig:sol}(a)). Under the plane strain assumption and using Voigt notation, the stress and strain vectors are defined as
$\cu{\sigma}=(\sigma_{xx},\sigma_{yy},\sigma_{xy})^\top$
and
$\cu{\varepsilon}=(\varepsilon_{xx},\varepsilon_{yy},2\varepsilon_{xy})^\top$.
The corresponding isotropic stiffness matrix is given by
\begin{equation}\label{eq:C2d}
    \cu{C} =
    \begin{bmatrix}
        \lambda+2\mu & \lambda & 0 \\
        \lambda & \lambda+2\mu & 0 \\
        0 & 0 & \mu
    \end{bmatrix},
\end{equation}
where the Lamé parameters are given by
\begin{equation}\label{eq:lame}
    \lambda=\frac{E\nu}{(1+\nu)(1-2\nu)}, \qquad
    \mu=\frac{E}{2(1+\nu)}.
\end{equation}

The Poisson ratio is fixed as $\nu=0.4$. The Young's modulus $E(\cu{x})$ is modeled as a log-normal Gaussian random field \cite{jha2025theory} to represent spatially heterogeneous materials:
\begin{equation}\label{eq:E}
    E(\cu{x})=\alpha_m \exp(w(\cu{x}))+\beta_m,\quad
    \alpha_m=10^8,\quad\beta_m=100,
\end{equation}
where $w(\cu{x})\sim\mathcal{N}(0,L_\Delta^{-2})$ and the covariance operator is defined by
\begin{equation}
    L_\Delta:=
    \begin{cases}
        -a\nabla\cdot b\nabla+c, & \text{in }\Omega,\\
        \boldsymbol{n}\cdot b\nabla, & \text{on }\partial\Omega,
    \end{cases}
\end{equation}
with parameters $a=0.005$, $b=1$, and $c=0.2$.

The boundary condition is specified as follows:
$\Gamma_1 = \{ \cu{x} \in \partial \Omega \mid x_1 = 0 \}$
is fixed, while a traction
$\cu{t}=(10^6,0)^\top$
is applied on
$\Gamma_2 = \{ \cu{x} \in \partial \Omega \mid x_1 = 1 \}$.
The body force is set to $\cu{f}=\mathbf{0}$.

Samples are generated by independently sampling the random field $w(\cu{x})$, and representative realizations are shown in Fig.~\ref{fig:sol}.

\begin{figure}[H]
    \centering
    \subfigure[]{\includegraphics[width=0.2\textwidth]{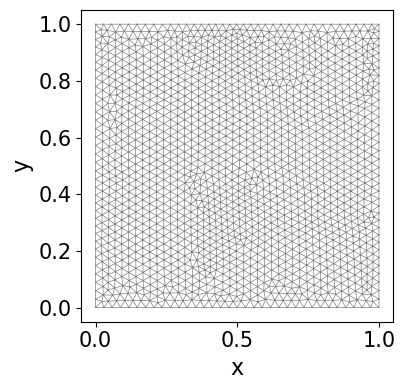}}
     \hspace{0.6cm}
    \subfigure[]{\includegraphics[width=0.24\textwidth]{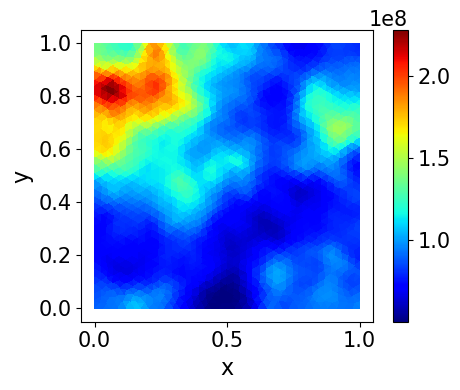}}
     \hspace{0.6cm}
    \subfigure[]{\includegraphics[width=0.25\textwidth]{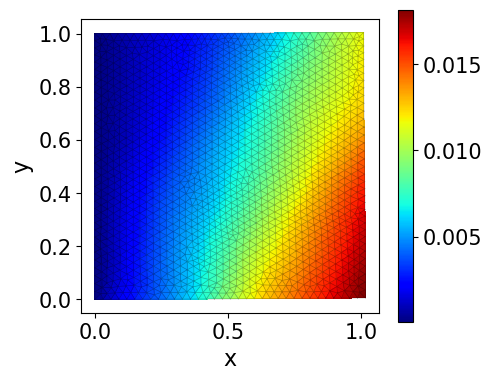}}
    \caption{Illustration of Data-1: (a) mesh discretization, (b) distribution of Young's modulus, and (c) displacement field of a representative solution.}
   \label{fig:sol}
\end{figure}

\noindent\textbf{Data-2: 3D isotropic elasticity with spatially varying Young's modulus}

The computational domain and its unstructured mesh discretization are shown in Fig.~\ref{fig:3dmesh}. Using Voigt notation, the stress and strain vectors are defined as
\[
\cu{\sigma}=(\sigma_{xx},\sigma_{yy},\sigma_{zz},\sigma_{yz},
\sigma_{xz},\sigma_{xy})^\top
\]
and
\[
\cu{\varepsilon}=(\varepsilon_{xx},
\varepsilon_{yy},\varepsilon_{zz},2\varepsilon_{yz},2\varepsilon_{xz},
2\varepsilon_{xy})^\top .
\]
The corresponding isotropic stiffness matrix is given by
\begin{equation}\label{eq:C3d}
    \cu{C} =
    \begin{bmatrix}
        \lambda+2\mu & \lambda & \lambda & 0 & 0 & 0 \\
        \lambda & \lambda+2\mu & \lambda & 0 & 0 & 0 \\
        \lambda & \lambda & \lambda+2\mu & 0 & 0 & 0 \\
        0 & 0 & 0 & \mu & 0 & 0 \\
        0 & 0 & 0 & 0 & \mu & 0 \\
        0 & 0 & 0 & 0 & 0 & \mu
    \end{bmatrix}.
\end{equation}

The Young's modulus $E(\cu{x})$ is generated using the same log-normal Gaussian random field model as in Data-1, and the Poisson ratio is fixed as $\nu=0.4$.

The boundary conditions are specified as follows: $\Gamma_1=\{\cu{x}\in\partial\Omega \mid x_1=0\}$ is fixed, while a traction $\cu{t}=(10^6,0,0)^\top$ is applied on $\Gamma_2=\{\cu{x}\in\partial\Omega \mid x_1=3\}$. The body force is set to $\cu{f}=\mathbf{0}$.

\begin{figure}[ht]
    \centering
    \includegraphics[width=0.5\linewidth]{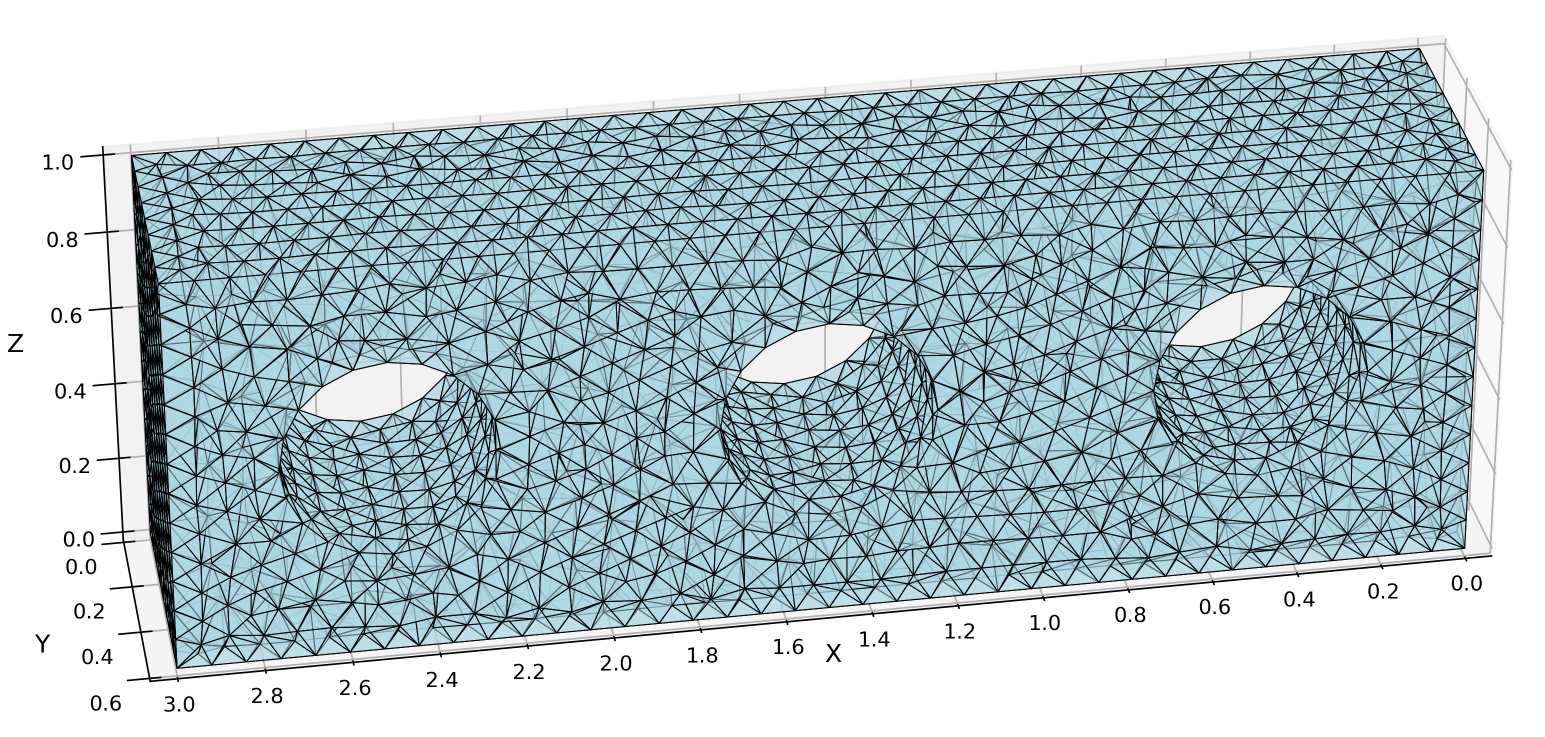}
    \caption{Three-dimensional computational domain and mesh discretization used in Data-2 and Data-4.}
    \label{fig:3dmesh}
\end{figure}

\noindent\textbf{Data-3: 2D general anisotropic elasticity}

This dataset models anisotropic materials with strong directional dependence. The computational domain and its unstructured mesh are shown in Fig.~\ref{fig:ani_ela_mesh}(a).
The stiffness matrix $\cu{C}$ is obtained by rotating the principal material tensor via a Bond transformation,
\[
\cu{C}=\cu{T}^\top \hat{\cu{C}} \cu{T},
\]
where the transformation matrix is
\begin{equation}
    \cu{T}=
    \begin{bmatrix}
        \cos^2\theta & \sin^2\theta & \cos\theta\sin\theta \\
        \sin^2\theta & \cos^2\theta & -\cos\theta\sin\theta \\
        -2\cos\theta\sin\theta & 2\cos\theta\sin\theta &
        \cos^2\theta-\sin^2\theta
    \end{bmatrix},
\end{equation}
the stiffness matrix in the principal material directions is
\begin{equation} \label{data3C}
    \hat{\cu{C}}=\frac{1}{1-\nu_{12}\nu_{21}}
    \begin{bmatrix}
        E_1 & \nu_{21}E_1 & 0 \\
        \nu_{12}E_2 & E_2 & 0 \\
        0 & 0 & G_{12}(1-\nu_{12}\nu_{21})
    \end{bmatrix},
\end{equation}
with the reciprocity condition $\nu_{21}E_1=\nu_{12}E_2$.

The material parameters are independently sampled as
\begin{equation}
\begin{aligned}
    E_1&\sim\mathcal{U}(50\times10^9,200\times10^9),\quad
    E_2\sim\mathcal{U}(50\times10^6,200\times10^6),\\
    G_{12}&\sim\mathcal{U}(2\times10^9,20\times10^9),\quad
    \nu_{12}\sim\mathcal{U}(0.2,0.35),\quad
    \theta\sim\mathcal{U}(0,\pi/2),
\end{aligned}
\end{equation}
leading to stiffness ratios $E_1/E_2$ up to $O(10^3)$, characteristic of strongly anisotropic materials.

The boundary conditions are specified as follows: $\Gamma_1={\cu{x}\in\partial\Omega \mid x_1=0}\cup{\cu{x}\in\partial\Omega \mid x_2=0}$ is fixed, while a traction $\cu{t}=(10^8,0)^\top$ is applied on $\Gamma_2={\cu{x}\in\partial\Omega \mid x_2=1}$. The body force is set to $\cu{f}=\mathbf{0}$. Rrepresentative realizations are shown in Fig.~\ref{fig:ani_ela_mesh}.

\begin{figure}[ht]
    \centering
     \subfigure[]{\includegraphics[width=0.25\textwidth]{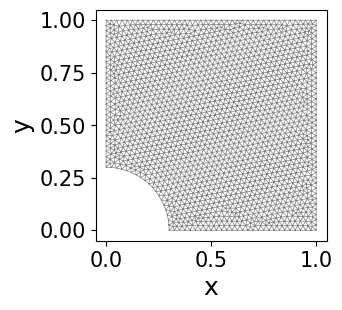}}
     \hspace{1cm}
    \subfigure[]{\includegraphics[width=0.3\textwidth]{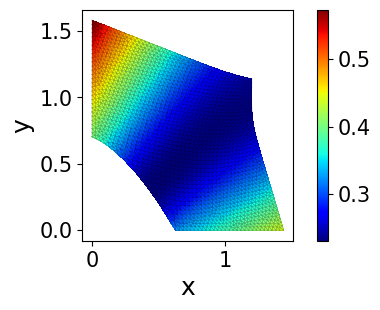}}
    \caption{Representative realization of Data-3: (a) mesh discretization and (b) displacement field of the solution. The parameters are set as $E_1 = 200\times10^9$, $E_2=50\times10^6$, $G_{12} =2\times10^9$, $\nu_{12} = 0.3$, and $\theta = \frac{\pi}{4}$.}
    \label{fig:ani_ela_mesh}
\end{figure}

\noindent\textbf{Data-4: 3D orthotropic elasticity}

The computational domain and mesh are the same as those used in Data-2 (see Fig.~\ref{fig:3dmesh}). The material principal axes are aligned with the global coordinate system. The stiffness matrix is given by
\begin{equation}
    \cu{C}=
    \begin{bmatrix}
        \frac{1}{E_1} & -\frac{\nu_{21}}{E_2} & -\frac{\nu_{31}}{E_3}
        & 0 & 0 & 0 \\
        -\frac{\nu_{12}}{E_1} & \frac{1}{E_2} & -\frac{\nu_{32}}{E_3}
        & 0 & 0 & 0 \\
        -\frac{\nu_{13}}{E_1} & -\frac{\nu_{23}}{E_2} & \frac{1}{E_3}
        & 0 & 0 & 0 \\
        0 & 0 & 0 & \frac{1}{G_{23}} & 0 & 0 \\
        0 & 0 & 0 & 0 & \frac{1}{G_{31}} & 0 \\
        0 & 0 & 0 & 0 & 0 & \frac{1}{G_{12}}
    \end{bmatrix}^{-1},
\end{equation}
which satisfies the reciprocity condition $\nu_{ij}E_j=\nu_{ji}E_i$.

The material parameters are independently sampled as
\begin{align*}
    E_1&\sim\mathcal{U}(50\times10^9,200\times10^9),\quad
    E_2\sim\mathcal{U}(50\times10^8,200\times10^8),\quad
    E_3\sim\mathcal{U}(50\times10^6,200\times10^6),\\
    G_{12}&\sim\mathcal{U}(2\times10^9,20\times10^9),\quad
    G_{23}\sim\mathcal{U}(2\times10^6,20\times10^6),\quad
    G_{31}\sim\mathcal{U}(2\times10^8,20\times10^8),\\
    \nu_{12}&\sim\mathcal{U}(0.25,0.4),\quad
    \nu_{13}\sim\mathcal{U}(0.25,0.4),\quad
    \nu_{23}\sim\mathcal{U}(0.25,0.4),
\end{align*}
resulting in strongly anisotropic material responses.

The boundary conditions are specified as follows: $\Gamma_D$ is fixed, while a traction $\cu{t}=(0,0,10^8)^\top$ is applied on $\Gamma_N$. The body force is set to $\cu{f}=\mathbf{0}$.

\subsubsection{Related analysis}

This section analyzes the two classes of equations introduced above.
We first employed Local Fourier Analysis (LFA)\cite{trottenberg2000multigrid} to reveal the spectral
limitations of the weighted block Jacobi method, and then discuss the
analytical properties satisfied by these problems.

LFA  is a classical tool for quantitatively evaluating the frequency damping behavior of
iterative methods. For a general constant-coefficient elliptic partial
differential equation, the discretization on a uniform grid with mesh size
$h$ leads to a translation-invariant stencil operator
$\mathcal{L}_h$ at interior grid points (a scalar stencil for scalar
problems and a block stencil for vector-valued problems).

Applying the discrete operator to a Fourier mode
$\bm{\varphi}(\bm{\theta}, \bm{x}) = e^{i \bm{\theta} \cdot \bm{x} / h}$,
where $\bm{\theta} = (\theta_1, \dots, \theta_d) \in [-\pi,\pi)^d$
denotes the frequency vector, yields the symbol matrix
$\widetilde{\mathcal{L}}_h(\bm{\theta})$.
Correspondingly, the error-propagation symbol of the weighted Jacobi
(or block Jacobi) smoother can be written as

\begin{equation}
\widetilde{\mathcal{E}}(\bm{\theta})
=
\bm{I}
-
\omega
\widetilde{\mathcal{D}}^{-1}
\widetilde{\mathcal{L}}_h(\bm{\theta}),
\end{equation}

where $\widetilde{\mathcal{D}}$ denotes the symbol of the block diagonal
preconditioner and $\omega$ is the relaxation parameter.

The spectral radius $\rho(\widetilde{\mathcal{E}}(\bm{\theta}))$
characterizes the ability of the smoother to damp error components at a
given frequency. For standard elliptic problems, local smoothers typically
exhibit strong damping for high-frequency modes
($\rho \ll 1$), while low-frequency errors are only weakly reduced
($\rho \rightarrow 1$). However, when the governing equations possess more
challenging physical properties, the region in the high-frequency range
where $\rho(\widetilde{\mathcal{E}})\approx1$ can expand significantly.

We next present the LFA  for two representative problems:
the two-dimensional anisotropic diffusion equation and the isotropic
linear elasticity system (i.e., Eq.~\eqref{eq:ela} under the assumption
of homogeneous isotropic materials), both discretized using bilinear
finite elements on uniform grids.

\paragraph{Anisotropic diffusion equation}

The discrete operator can be written in the form of a $3\times3$ stencil

$$
\begin{aligned}
&c_1
\begin{bmatrix}
-\frac{1}{6} & \frac{1}{3} & -\frac{1}{6} \\
-\frac{2}{3} & \frac{4}{3} & -\frac{2}{3} \\
-\frac{1}{6} & \frac{1}{3} & -\frac{1}{6}
\end{bmatrix}
+
(c_2+c_3)
\begin{bmatrix}
\frac{1}{4} & 0 & -\frac{1}{4} \\
0 & 0 & 0 \\
-\frac{1}{4} & 0 & \frac{1}{4}
\end{bmatrix} \\
+
&c_4
\begin{bmatrix}
-\frac{1}{6} & -\frac{2}{3} & -\frac{1}{6} \\
\frac{1}{3} & \frac{4}{3} & \frac{1}{3} \\
-\frac{1}{6} & -\frac{2}{3} & -\frac{1}{6}
\end{bmatrix}
:=
\begin{bmatrix}
k_{-1,1} & k_{0,1} & k_{1,1} \\
k_{-1,0} & k_{0,0} & k_{1,0} \\
k_{-1,-1} & k_{0,-1} & k_{1,-1}
\end{bmatrix},
\end{aligned}
$$
where $c_1,c_2,c_3,c_4$ are defined in Eq.~\eqref{eq:ani_xishu}.
The corresponding symbol is
\begin{equation*}
\widetilde{\mathcal{L}}_h(\bm{\theta})
=
\sum_{p=-1}^{1}
\sum_{q=-1}^{1}
k_{pq}
e^{i(p\theta_1+q\theta_2)} .
\end{equation*}

\paragraph{Isotropic linear elasticity}

The discrete operator can be expressed as the following $2\times2$
block stencil

\begin{equation*}
\mathcal{L}_h =
\begin{bmatrix}
(\lambda+2\mu) S_x + \mu S_y & (\lambda+\mu) S_{xy} \\
(\lambda+\mu) S_{xy} & \mu S_x + (\lambda+2\mu) S_y
\end{bmatrix},
\end{equation*}

where $\lambda$ and $\mu$ are the Lamé constants
(see Eq.~\eqref{eq:lame}). The stencil matrices are given by

\begin{equation*}
S_x =
\frac{1}{6}
\begin{bmatrix}
-1 & 2 & -1 \\
-4 & 8 & -4 \\
-1 & 2 & -1
\end{bmatrix},
\quad
S_y =
\frac{1}{6}
\begin{bmatrix}
-1 & -4 & -1 \\
2 & 8 & 2 \\
-1 & -4 & -1
\end{bmatrix},
\quad
S_{xy} =
\frac{1}{4}
\begin{bmatrix}
1 & 0 & -1 \\
0 & 0 & 0 \\
-1 & 0 & 1
\end{bmatrix}.
\end{equation*}
The corresponding symbol matrix is 
\begin{align*}
\widetilde{\mathcal{L}}_h(\bm{\theta}) &=
\begin{bmatrix}
(\lambda+2\mu)S_x^* + \mu S_y^* & (\lambda+\mu)S_{xy}^* \\
(\lambda+\mu)S_{xy}^* & \mu S_x^* + (\lambda+2\mu)S_y^*
\end{bmatrix},
\end{align*}
where
$$
S_x^* = \frac{2}{3}(1-\cos\theta_1)(2+\cos\theta_2),
\quad
S_y^* = \frac{2}{3}(2+\cos\theta_1)(1-\cos\theta_2),
\quad
S_{xy}^* = \sin\theta_1 \sin\theta_2 .
$$

Fig.~\ref{fig:LFA} presents the smoothing factors of the weighted block
Jacobi method with $\omega=\frac{2}{3}$.
The first row corresponds to the anisotropic diffusion equation
($\theta=\pi/2$, $\epsilon\in\{1,10^{-3},10^{-6}\}$),
while the second row corresponds to the elasticity system
($E=1$, $\nu\in\{0.3,0.4,0.45\}$).
The color represents the smoothing factor
$\rho(\widetilde{\mathcal{E}})$.

As $\epsilon\rightarrow0$ or $\nu\rightarrow0.5$,
the region where $\rho\approx1$ expands significantly,
even within the high-frequency range.
These results indicate that standard local smoothers fail to achieve uniform
error damping across the full spectrum, and therefore a global correction
mechanism is required to eliminate these persistent error modes.

\begin{figure}[H]
\centering
\subfigure[]{\includegraphics[width=0.3\textwidth]{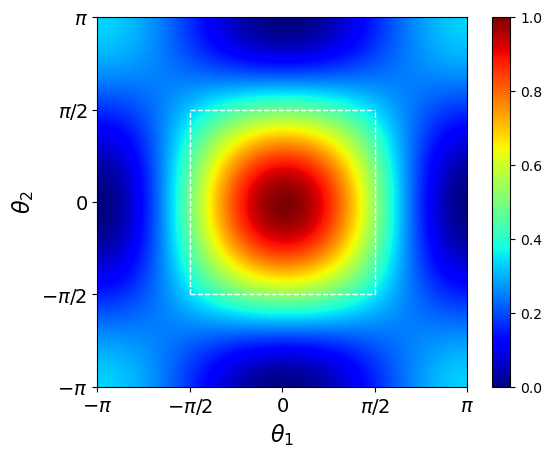}}
\hspace{0.4cm}
\subfigure[]{\includegraphics[width=0.3\textwidth]{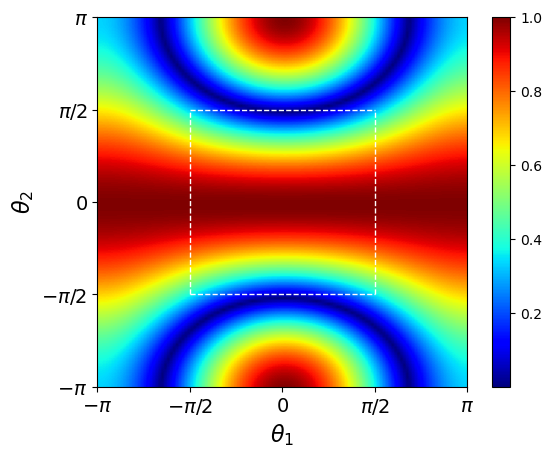}}
\hspace{0.4cm}
\subfigure[]{\includegraphics[width=0.3\textwidth]{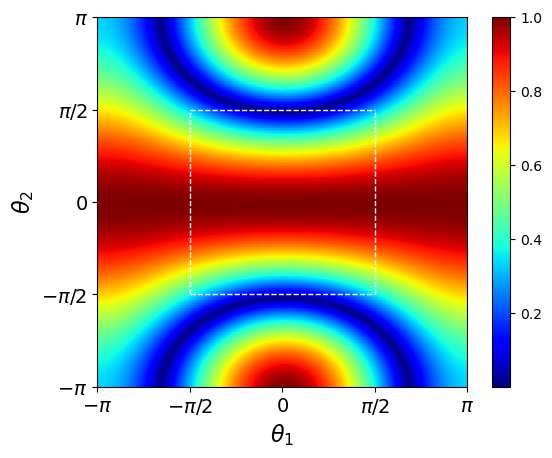}}
\\
\subfigure[]{\includegraphics[width=0.3\textwidth]{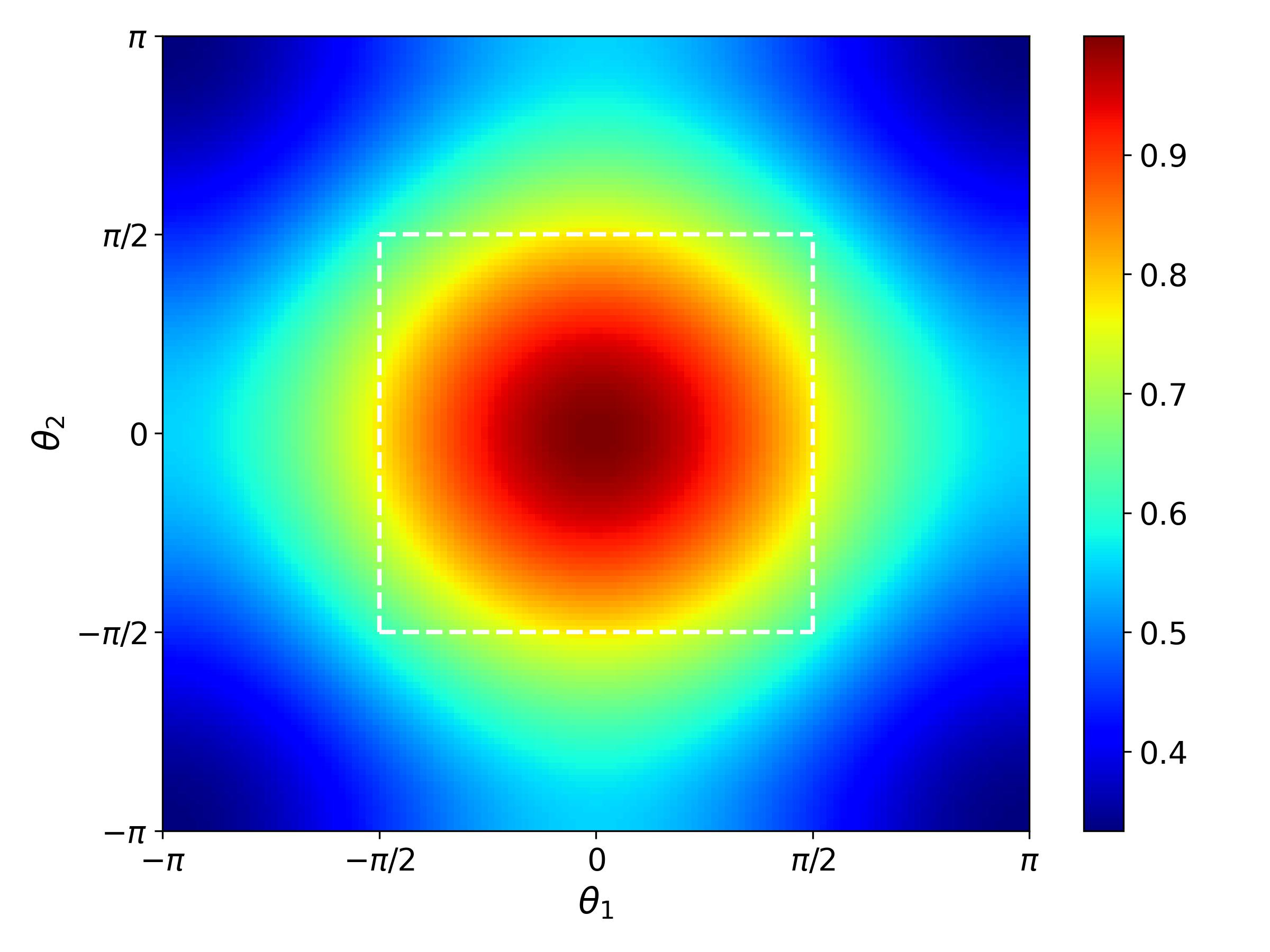}}
\hspace{0.4cm}
\subfigure[]{\includegraphics[width=0.3\textwidth]{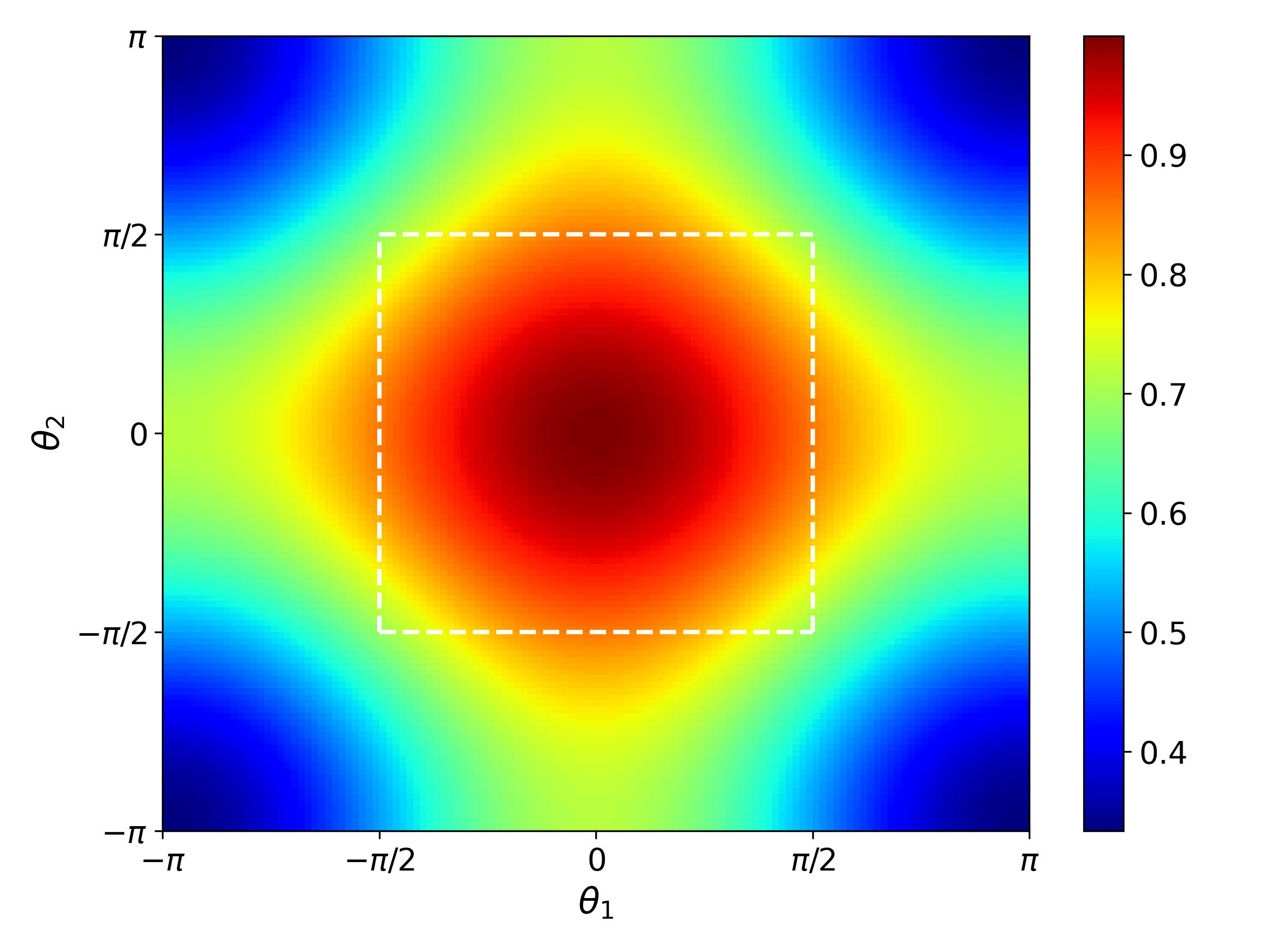}}
\hspace{0.4cm}
\subfigure[]{\includegraphics[width=0.3\textwidth]{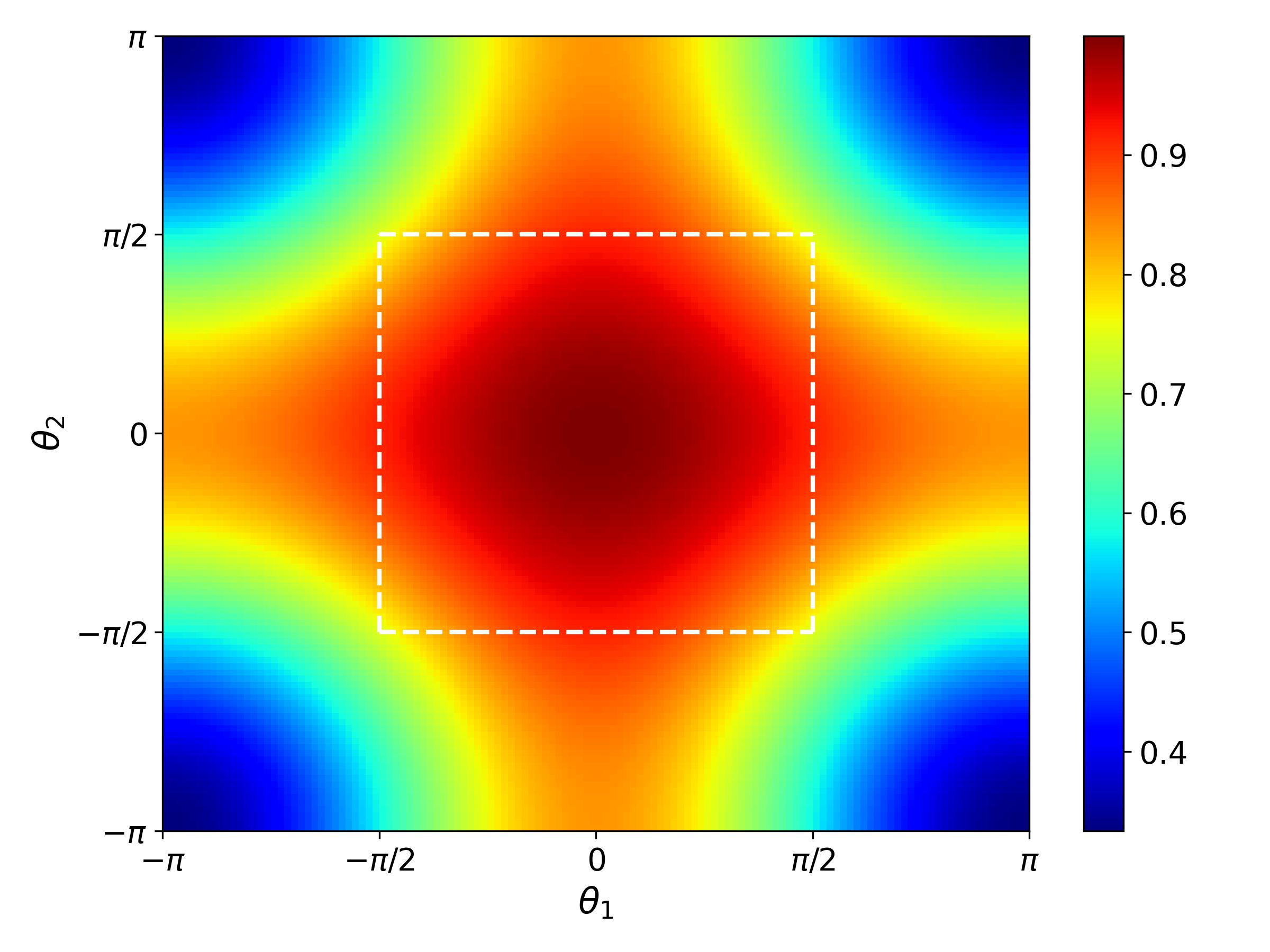}}
\caption{Frequency spectra obtained by LFA. The first row corresponds to
the anisotropic diffusion equation, while the second row corresponds to
the linear elasticity system.}
\label{fig:LFA}
\end{figure}

Finally, we remark that the bilinear forms associated with both classes of
equations satisfy symmetry and boundedness. The coercivity is guaranteed
by the Poincaré inequality for the diffusion problem and by Korn's
inequality for the elasticity system, respectively. Therefore, the
theoretical assumptions required for the subsequent analysis are fulfilled.

\subsection{Experimental setup and results}

All datasets are generated via finite element discretization. 
The training, validation, and test sets are constructed in the same manner, 
with $12{,}000$ samples used for training. 
The optimizer is Adam~\cite{adam2014method} with a learning rate of $1\times10^{-4}$. 
ReLU is adopted as the activation function in Meta-T and Meta-$\lambda$. 
The batch size is $64$, and the number of hybrid iterations is set to $K=5$. 
The architecture of the operator network $\mathcal{M}$ is kept identical across all experiments; 
see~\cite{li2023fourier} for details. 
Performance is evaluated by the average number of iterations required to reduce the relative residual to $10^{-6}$. 
All experiments are performed on an Nvidia A100-SXM4-80\,GB GPU.The software environment includes Python~3.9, PyTorch~2.1, pyAMG~\cite{pyamg2023}, 
FEniCS~\cite{logg2012automated}, and the graph neural network library PyTorch Geometric~\cite{FeyLenssen2019}.

\subsubsection{Anisotropic Diffusion Equation}

In this subsection, we evaluate the convergence performance of G-FNS and AG-FNS for the anisotropic diffusion equation. The experiments cover both two-dimensional structured and unstructured meshes. In particular, comparisons with FNS are conducted on structured meshes.

\noindent\textbf{Evaluation of G-FNS}

Since the anisotropy parameter $\epsilon$ spans six orders of magnitude, directly using $(\epsilon,\theta)$ as inputs leads to a severe mismatch in magnitude, which adversely affects the training process. To address this issue, $\epsilon$ is decomposed into its mantissa and exponent in scientific notation. The mantissa and the direction angle $\theta$ are each projected to a 64-dimensional feature representation through fully connected layers, while the exponent is encoded as a 64-dimensional vector using an embedding layer. The resulting features are concatenated and fed into a fully connected network with two hidden layers of sizes 500 and 1000, which predicts $\widetilde{\Lambda}$ and $\mathcal{C}$.
The smoothing operator $\cu{B}$ is taken as 10 steps of block weighted Jacobi iteration with relaxation parameter $\omega=2/3$.

$\bullet$ \textbf{Structured mesh}

In the structured-mesh experiments, a uniform $64\times64$ mesh is employed.
For each parameter pair $(\epsilon,\theta)$, ten right-hand sides $f$ are randomly sampled for testing. 
Tables~\ref{tab:epsilon} and~\ref{tab:theta} report the iteration counts on the test set.

As shown in Table~\ref{tab:epsilon}, for fixed $\theta=\pi/5$, the iteration counts of G-FNS remain stable at approximately $11$-$12$ across all values of $\epsilon$, indicating negligible degradation as the anisotropy strength increases. 
For fixed $\epsilon=10^{-6}$ (Table~\ref{tab:theta}), G-FNS converges within $10$-$20$ iterations for most values of the orientation angle $\theta$. 
In comparison, FNS exhibits significantly weaker generalization, whereas G-FNS maintains robust performance with respect to both $\epsilon$ and $\theta$.

\begin{table}[htbp] 
	\centering
    	\caption{Structured mesh results with $\theta=\pi/5$, evaluating generalization with respect to $\epsilon$.}
    \label{tab:epsilon}
	\begin{tabular}{lcccc}
		\toprule
		$\epsilon$	& 1   & 1e-2 & 1e-4 & 1e-6 \\ \midrule
		G-FNS &  $10.1\pm 0.54$ & $11.6\pm 0.49$ & $11.5 \pm 0.5$ & $11.4\pm 0.49$ \\
		FNS  & $11.2 \pm 0.6$ & $22.3 \pm 0.78$ & $28.3 \pm 1.09$ & $28.4 \pm 1.2$ \\ 
        \bottomrule
	\end{tabular}
\end{table}

\begin{table}[htbp]
\centering
\caption{Structured mesh results with $\epsilon=10^{-6}$, evaluating generalization with respect to $\theta$.}
\label{tab:theta}
\setlength{\tabcolsep}{3pt}
\begin{tabular}{lccccccc}
\toprule
 $\theta$	 & 0   & $\frac{\pi}{6}$ & $\frac{\pi}{3}$ & $\frac{\pi}{2}$ & $\frac{2\pi}{3}$ & $\frac{5\pi}{6}$  & $\pi$ \\ \midrule
 G-FNS & $10.2\pm 0.4$ &  $11.5\pm 0.5$ & $ 11.8\pm 0.4$& $20.4\pm1.56$ & $ 14.1\pm 0.83$ & $13.0\pm 0.77$ & $11.9\pm 0.3$\\
 FNS  & $67.3 \pm 4.0$ & $26.1 \pm 0.7$ & $27.7 \pm 2.0$ & $69.1\pm 2.34$ & $21.1 \pm 0.70$ & $24.8 \pm 0.87$ & $67.3 \pm 4.0$ \\ 
\bottomrule
\end{tabular}
\end{table}

$\bullet$ \textbf{Unstructured mesh}

An unstructured mesh with $1604$ nodes is employed (see Fig.~\ref{fig:sol}(a)). The frequency-domain parameter is set to $m=20$. All other experimental settings follow those of the structured-grid experiments.
Since the resulting coefficient matrix no longer exhibits a regular convolutional stencil structure, 
FNS is not directly applicable. 
The corresponding results of G-FNS are reported in Tables~\ref{tab:epsilon_un} and~\ref{tab:theta_un}.

As shown in the tables, G-FNS exhibits stable convergence across different values of $\epsilon$, 
requiring approximately $8$-$13$ iterations. 
For most values of the orientation angle $\theta$, the method also demonstrates robust convergence behavior.

\begin{table}[htbp] 
	\centering
    \caption{Unstructured mesh results with $\theta=\pi/5$, evaluating generalization with respect to $\epsilon$.}
    \label{tab:epsilon_un}
	\begin{tabular}{lcccc}
		\toprule
		$\epsilon$	& 1   & 1e-2 & 1e-4 & 1e-6 \\ \midrule
        G-FNS &    $7.5\pm 0.51$ & $ 12.4\pm 0.49 $ & $ 12.9 \pm 0.3 $ & $ 13.2\pm 0.5 $ \\ 
       AG-FNS &    $7.7\pm 0.46$ & $ 11.6\pm 0.49 $ & $ 12.1 \pm 0.3 $ & $ 12.2\pm 0.4 $ \\ 
        \bottomrule
	\end{tabular}
\end{table}

\begin{table}[htbp]
	\centering
    \caption{Unstructured mesh results with $\epsilon=10^{-6}$, evaluating generalization with respect to $\theta$.}
    \label{tab:theta_un}
    \setlength{\tabcolsep}{3pt}
	\begin{tabular}{lccccccc}
		\toprule
		 $\theta$	 & 0   & $\frac{\pi}{6}$ & $\frac{\pi}{3}$ & $\frac{\pi}{2}$ & $\frac{2\pi}{3}$ & $\frac{5\pi}{6}$  & $\pi$ \\ \midrule
       G-FNS   &  $10.8\pm 0.6$ &  $18.1\pm 1.51$ & $ 12.5\pm 0.67 $& $ 72.2\pm 7.83$ & $ 10.1\pm 0.3$ & $23.3\pm 2.5$ & $10.7\pm 0.6$ \\ 
       AG-FNS   &  $9.0\pm 0.0$ &  $14.7\pm 0.46$ & $ 10.2\pm 0.4$& $ 174.8\pm 23.02$ & $ 9.3\pm 0.46$ & $19.2\pm 0.4$ & $9.9\pm 0.3$ \\ 
		\bottomrule
	\end{tabular}
\end{table}

\noindent\textbf{Evaluation of AG-FNS}

Under the same experimental setup described above, we further evaluate the performance of AG-FNS on an unstructured mesh. 
As shown in Tables~\ref{tab:epsilon_un} and~\ref{tab:theta_un}, incorporating the adaptive basis transformation further reduces the number of 
iterations required for convergence compared with G-FNS.

Overall, for the anisotropic diffusion equation, G-FNS achieves faster convergence than FNS on structured mesh and demonstrates strong generalization with respect to both the anisotropy strength $\epsilon$ and the principal direction $\theta$. 
On unstructured mesh, both G-FNS and AG-FNS maintain stable and robust convergence, thereby overcoming the dependence of FNS on regular grid topologies.

\subsubsection{Linear elasticity equations}

This section investigates the convergence behavior of
G-FNS, AG-FNS, and ML-AG-FNS for linear elasticity problems.
The experiments cover a variety of scenarios, including
two-dimensional structured and unstructured meshes, as well as
three-dimensional isotropic and anisotropic elasticity problems.
The proposed methods are systematically compared with three
smoothed aggregation algebraic multigrid (SA-AMG) variants
\cite{vanvek1996algebraic,EnergyMinimization2011}:

\begin{itemize} 
    \item SA-AMG-I: Utilizes rigid body modes as near-nullspace candidates and applies energy minimization smoothing to the tentative prolongator. 
    \item SA-AMG-II: Utilizes rigid body modes as near-nullspace candidates and applies Jacobi smoothing to the tentative prolongator. 
    \item SA-AMG-III: Does not utilize rigid body modes and applies only energy minimization smoothing to the tentative prolongator. 
\end{itemize}

\noindent\textbf{Evaluation of G-FNS}

We first examine the performance of G-FNS on vector-valued PDEs using the structured mesh in Data-1 ($N=1089$ nodes, corresponding to 2178 degrees of freedom). 
The hidden dimensions of the Meta-$T$ and Meta-$\lambda$ networks are set to $d_1=64$ and $d_l = l \cdot d_1$ for $l=2,3,4$. 
The frequency-domain parameter is set to $m=15$, and the smoothing operator $\mathcal{B}$ is taken as 10 iterations of weighted block Jacobi with relaxation parameter $\omega=2/3$. 

As shown by the yellow dashed curve in Fig.~\ref{fig:2dela32}(a), the training loss decreases steadily during optimization and eventually plateaus; however, the attained accuracy remains limited.

To quantitatively assess performance, we randomly generate 10 test samples. Results are reported in Table~\ref{tab:ela_iter} and Fig.~\ref{fig:2dela32} (b)-(c). 
Table~\ref{tab:ela_iter} shows the average number of iterations required to achieve a relative residual of
$10^{-6}$,  when these methods are employed either as solvers or as preconditioners for FGMRES (without restart).
Figs.~\ref{fig:2dela32}(b) and (c) illustrate the residual decay
curves for a representative test sample.
These results demonstrate that, whether employed as a solver or a preconditioner, G-FNS consistently requires a large number of iterations — surpassing 200 in the solver setting — which suggests that the baseline architecture is inadequate for handling the increased complexity inherent in variable-coefficient, vector-valued PDE systems.

\begin{table}[!ht]
\setlength{\tabcolsep}{3pt}
\centering
\caption{Number of iterations required by different methods to reach a relative residual of $10^{-6}$ on different datasets, where ''--'' indicate that the corresponding experiment was not performed.}
\label{tab:ela_iter}
\resizebox{\textwidth}{!}{%
\begin{tabular}{l|lcccc}
\toprule
 &  & Data-1 (Structured) & Data-1 (Unstructured) & Data-2 (Unstructured) & Data-4 (Unstructured) \\ 
\hline\hline
\multirow{6}{*}{\rotatebox[origin=c]{90}{As solvers}}
& G-FNS      & $\geq 200$ & -- & -- & -- \\
& AG-FNS     & $15.5\pm1.8$ & $15.9\pm1.5$ & $\geq 500$ & -- \\
& ML-AG-FNS  & -- & -- & $7.9\pm1.2$ & $8.5\pm2.0$ \\
& SA-AMG-I   & $18.8\pm1.6$ & $23.4\pm2.5$ & $21.1\pm1.2$ & $238.4\pm75.4$ \\
& SA-AMG-II  & $23.4\pm1.7$ & $21.7\pm0.8$ & $91.6\pm12.1$ & $287.7\pm76.5$ \\
& SA-AMG-III & $\geq 200$ & $\geq 200$ & $\geq 500$ & $\geq 500$ \\
\hline\hline
\multirow{6}{*}{\rotatebox[origin=c]{90}{As preconditioners}}
& G-FNS      & $78.1\pm2.9$ & -- & -- & -- \\
& AG-FNS     & $13.3\pm0.9$ & $13.3\pm1.1$ & $25.1\pm1.0$ & -- \\
& ML-AG-FNS  & -- & -- & $7.4\pm0.4$ & $6.9\pm0.8$ \\
& SA-AMG-I   & $8.9\pm0.3$ & $8.8\pm0.4$ & $10.1\pm0.3$ & $30.1\pm3.5$ \\
& SA-AMG-II  & $10.0\pm0.2$ & $10.0\pm0.1$ & $18.9\pm0.8$ & $37.1\pm3.6$ \\
& SA-AMG-III & $47.2\pm0.9$ & $47.1\pm0.9$ & $150.1\pm1.9$ & $173.0\pm17.2$ \\
\bottomrule
\end{tabular}}
\end{table}

\begin{figure}[H]
    \centering
     \subfigure[]{\includegraphics[width=0.3\textwidth]{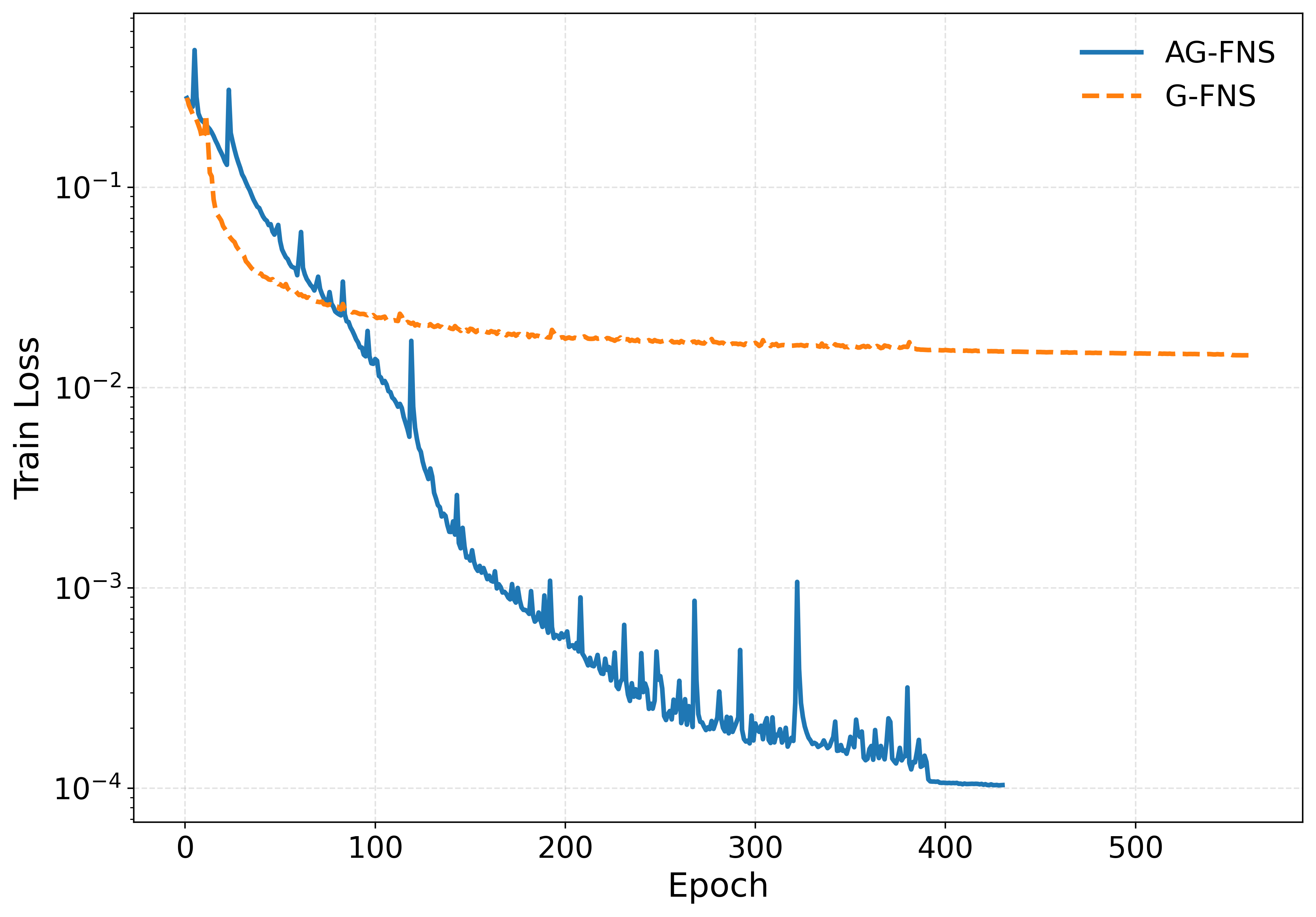}}
     \hspace{0.3cm}
    \subfigure[]{\includegraphics[width=0.28\textwidth]{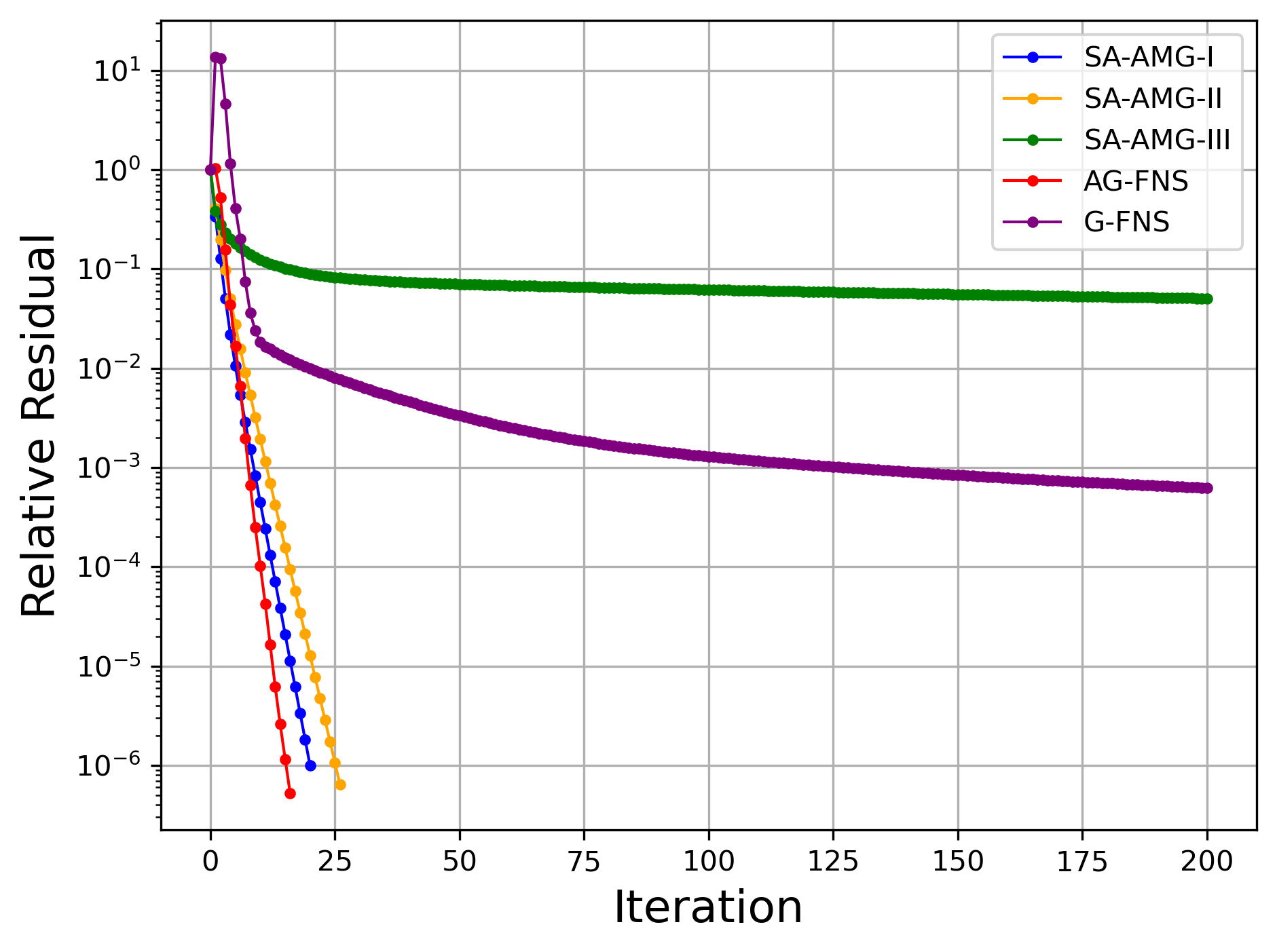}}
     \hspace{0.3cm}
     \subfigure[]{\includegraphics[width=0.28\textwidth]{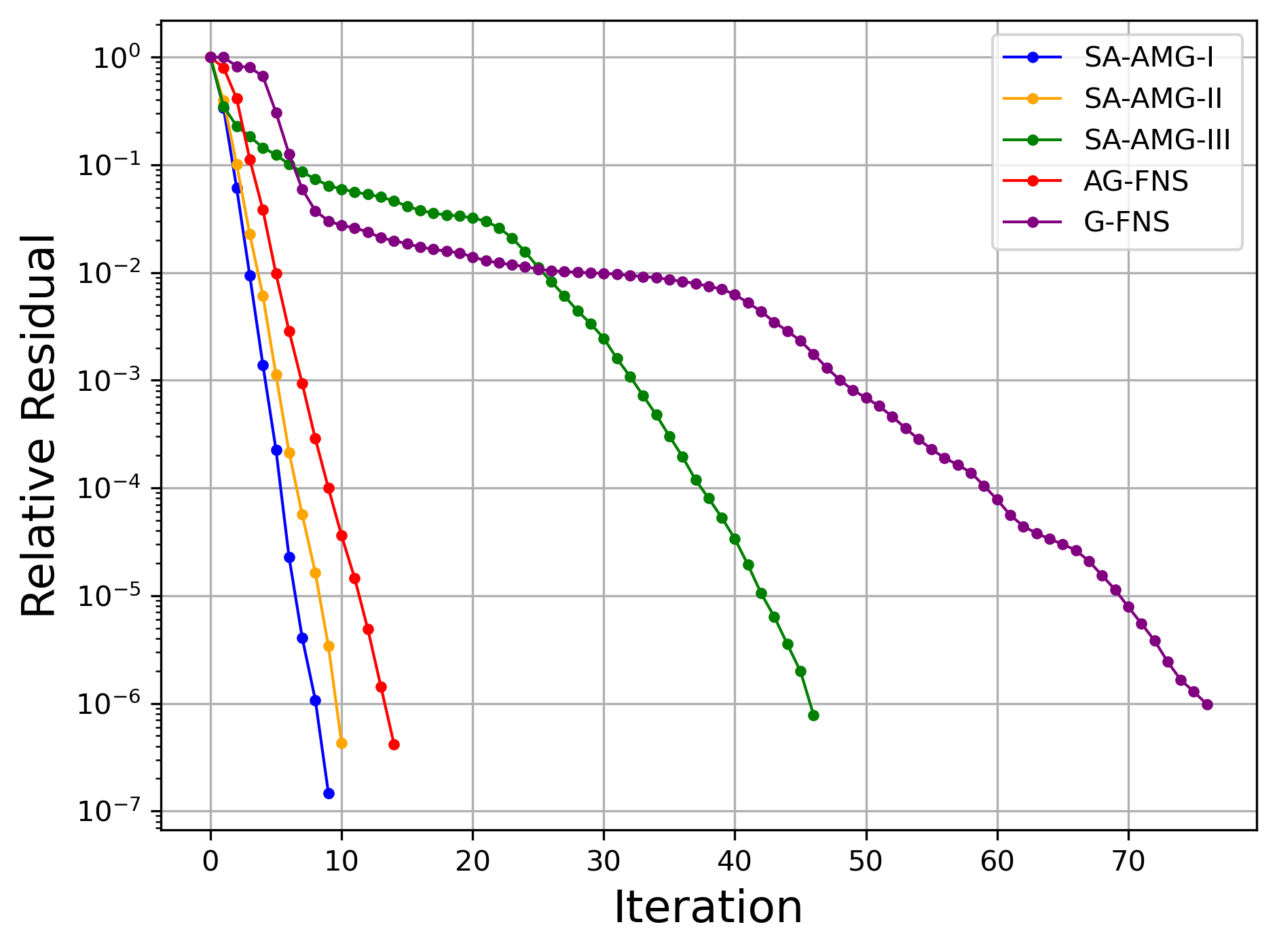}}
    \caption{Performance analysis on Data-1(Structured) comparing G-FNS and AG-FNS: (a) training loss trajectories; (b) as iterative solvers; and (c) as preconditioners for FGMRES.}
    \label{fig:2dela32}
\end{figure}

\begin{figure}[!htbp]
    \centering
    \subfigure[Initial error]{\includegraphics[width=0.2\textwidth]{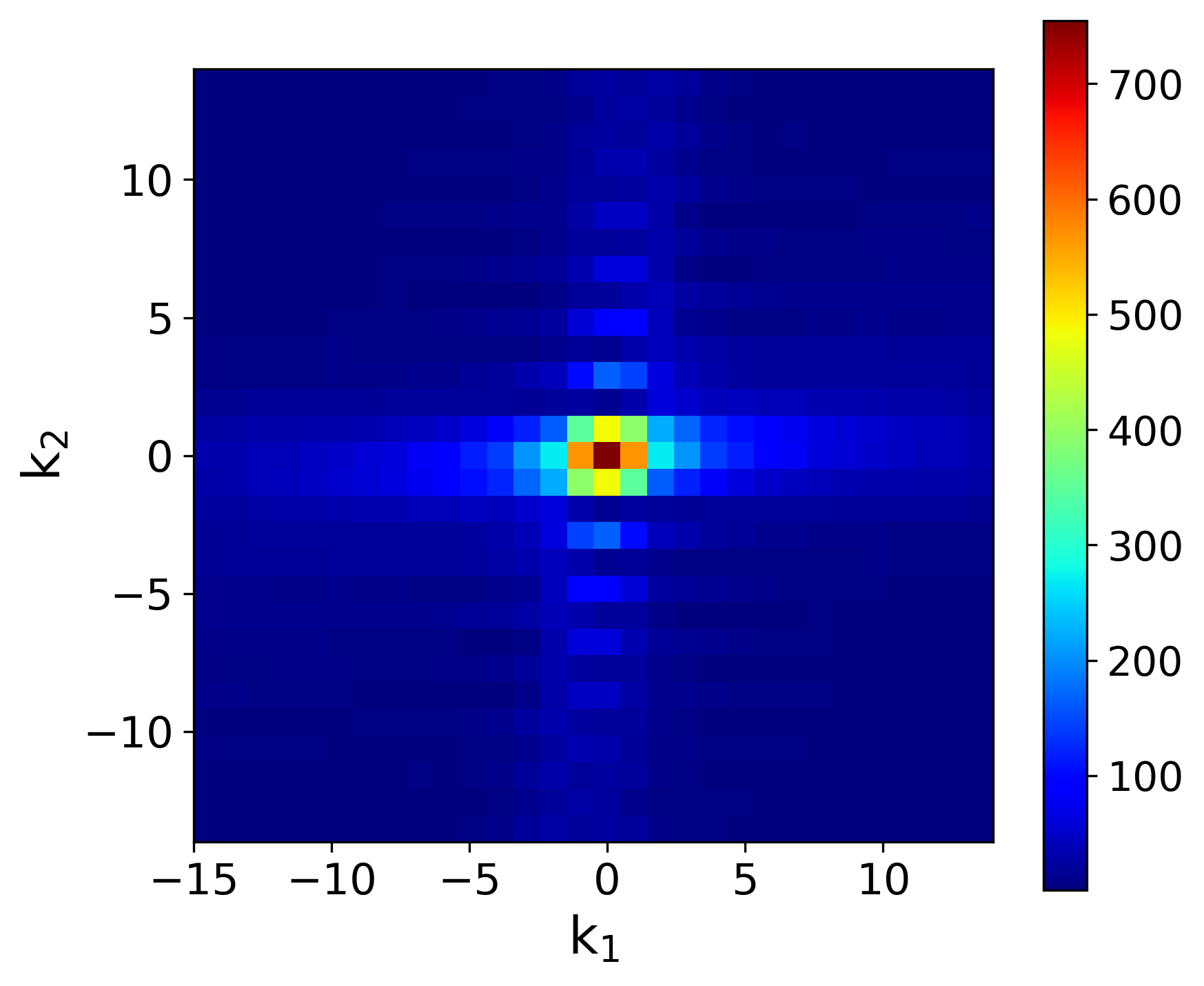}}
     \hspace{0.3 cm} 
     \subfigure[Error after $\cu{B}$]{\includegraphics[width=0.2\textwidth]{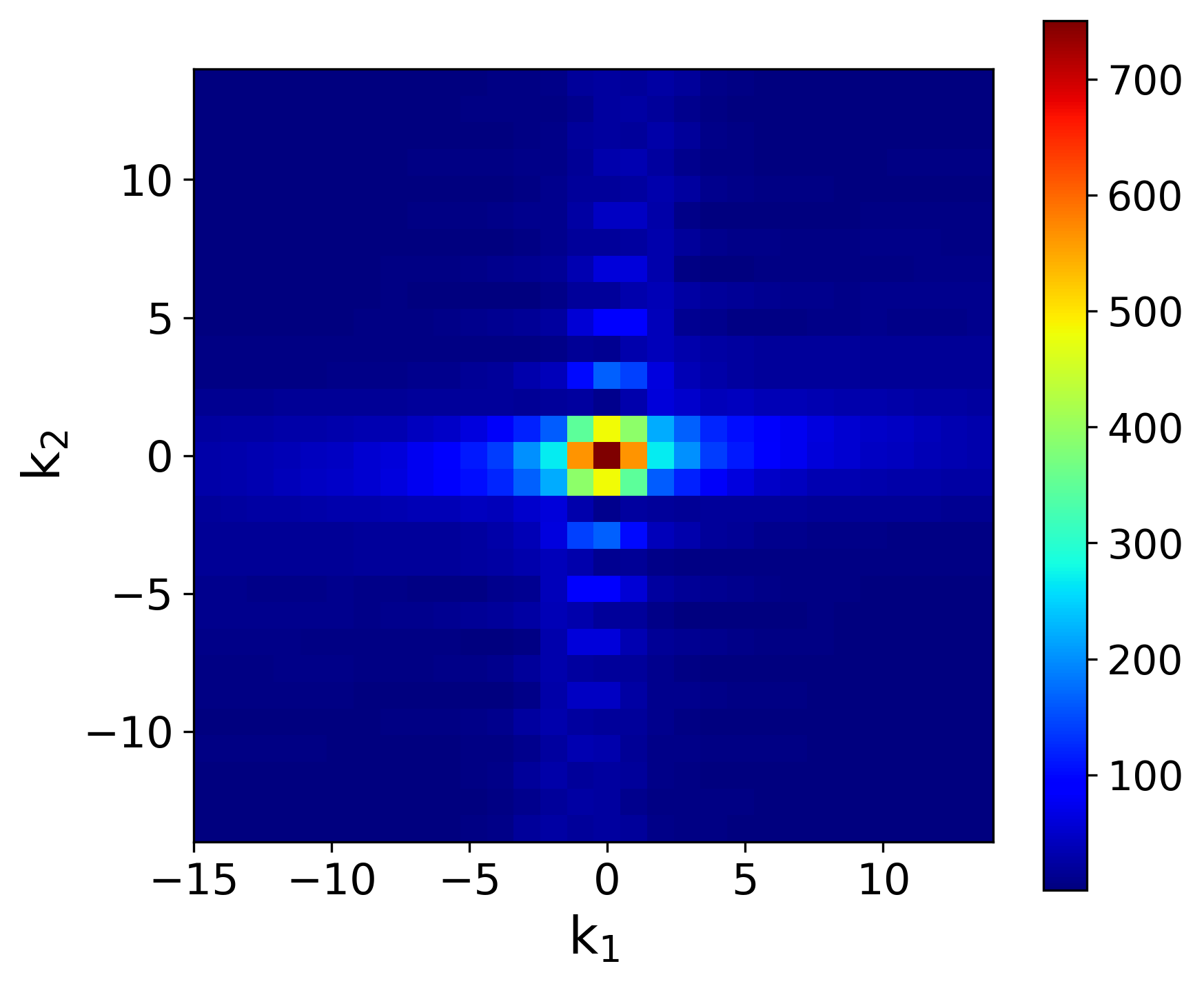}}
      \hspace{0.3 cm} 
     \subfigure[Correction from $\mathcal{H}$]{\includegraphics[width=0.2\textwidth]{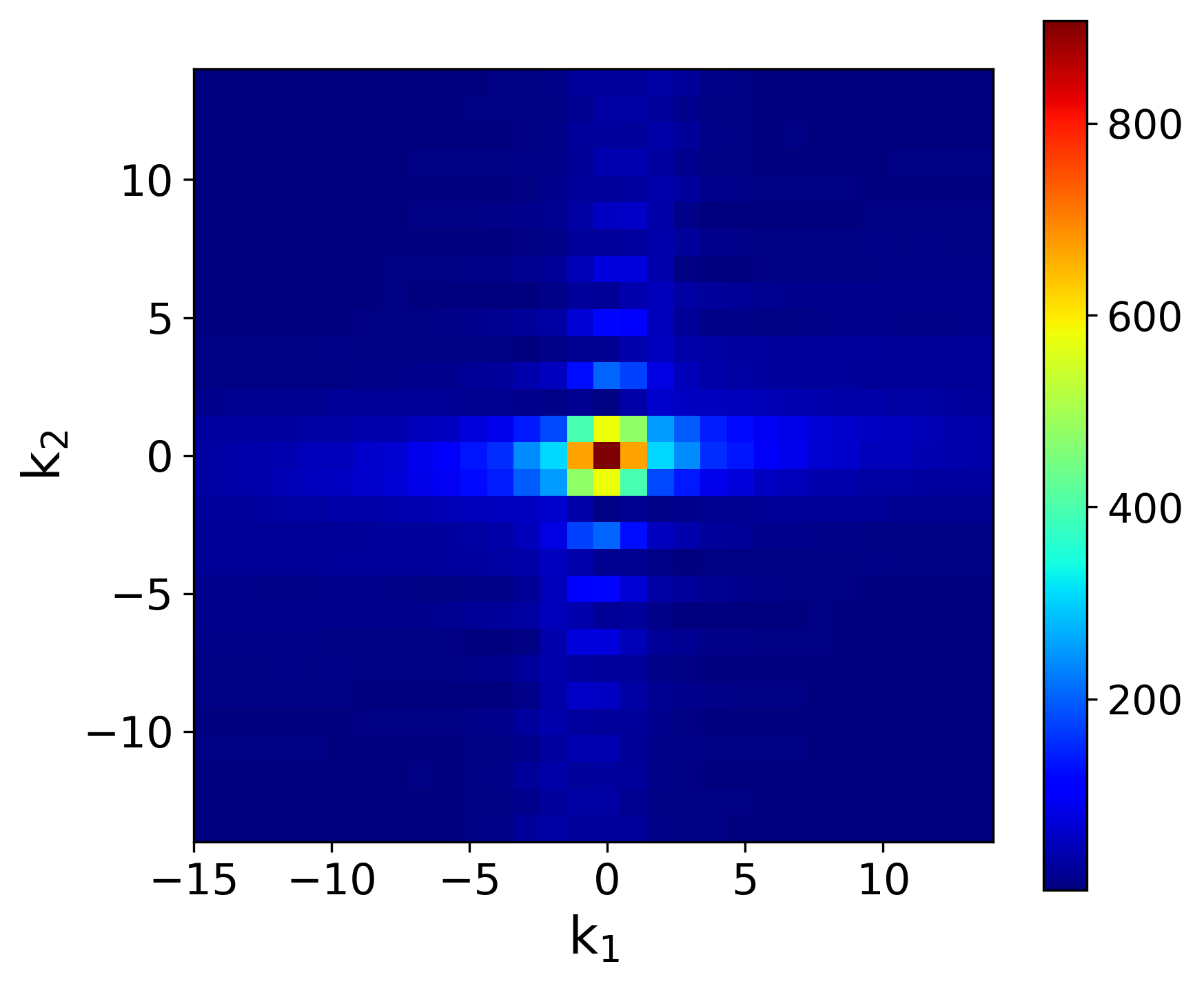}}
      \hspace{0.3 cm} 
      \subfigure[Error after correction]{\includegraphics[width=0.2\textwidth]{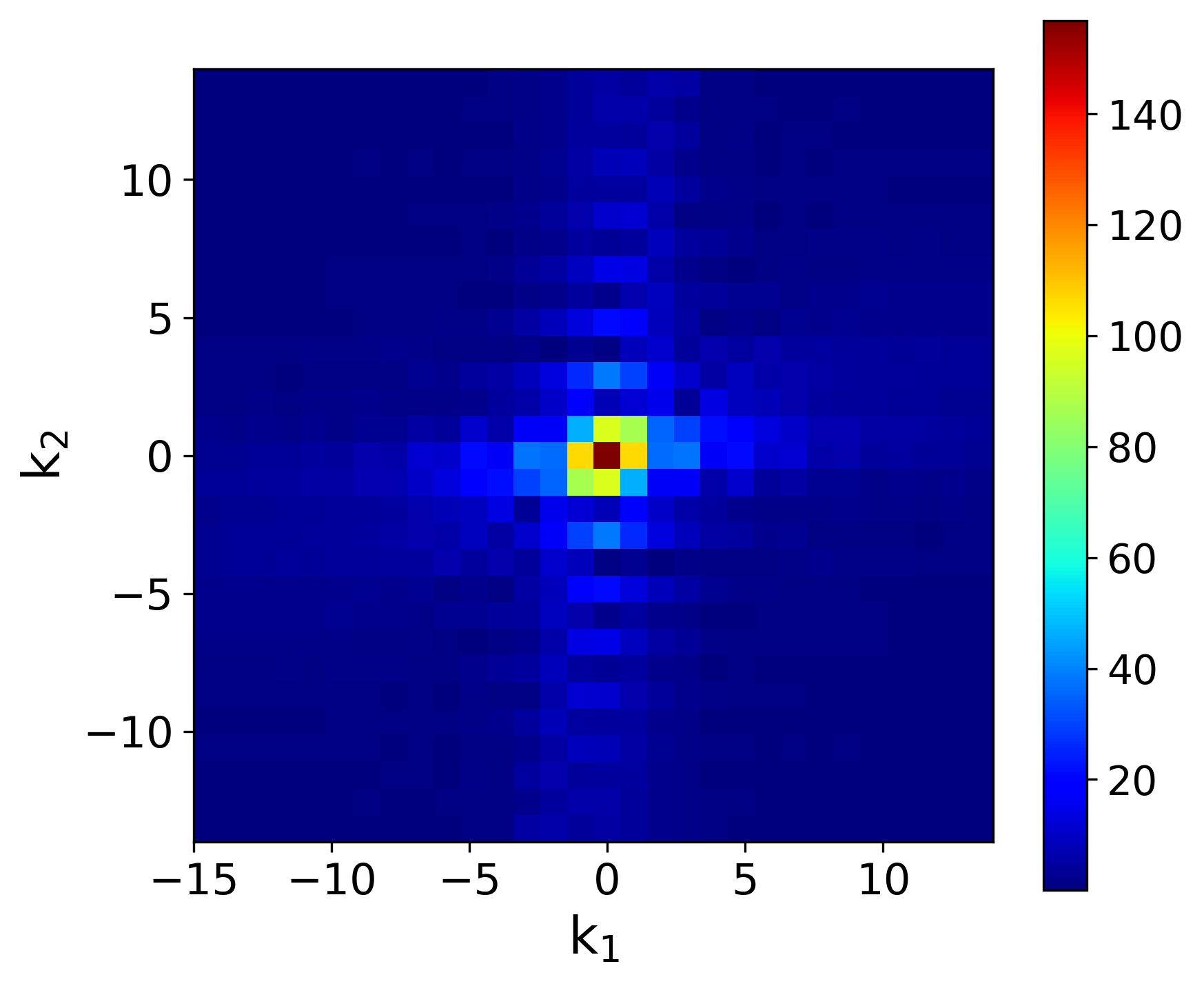}} \\
      \subfigure[Error after 4 iters of $\cu{B} + \mathcal{H}$]{\includegraphics[width=0.2\textwidth]{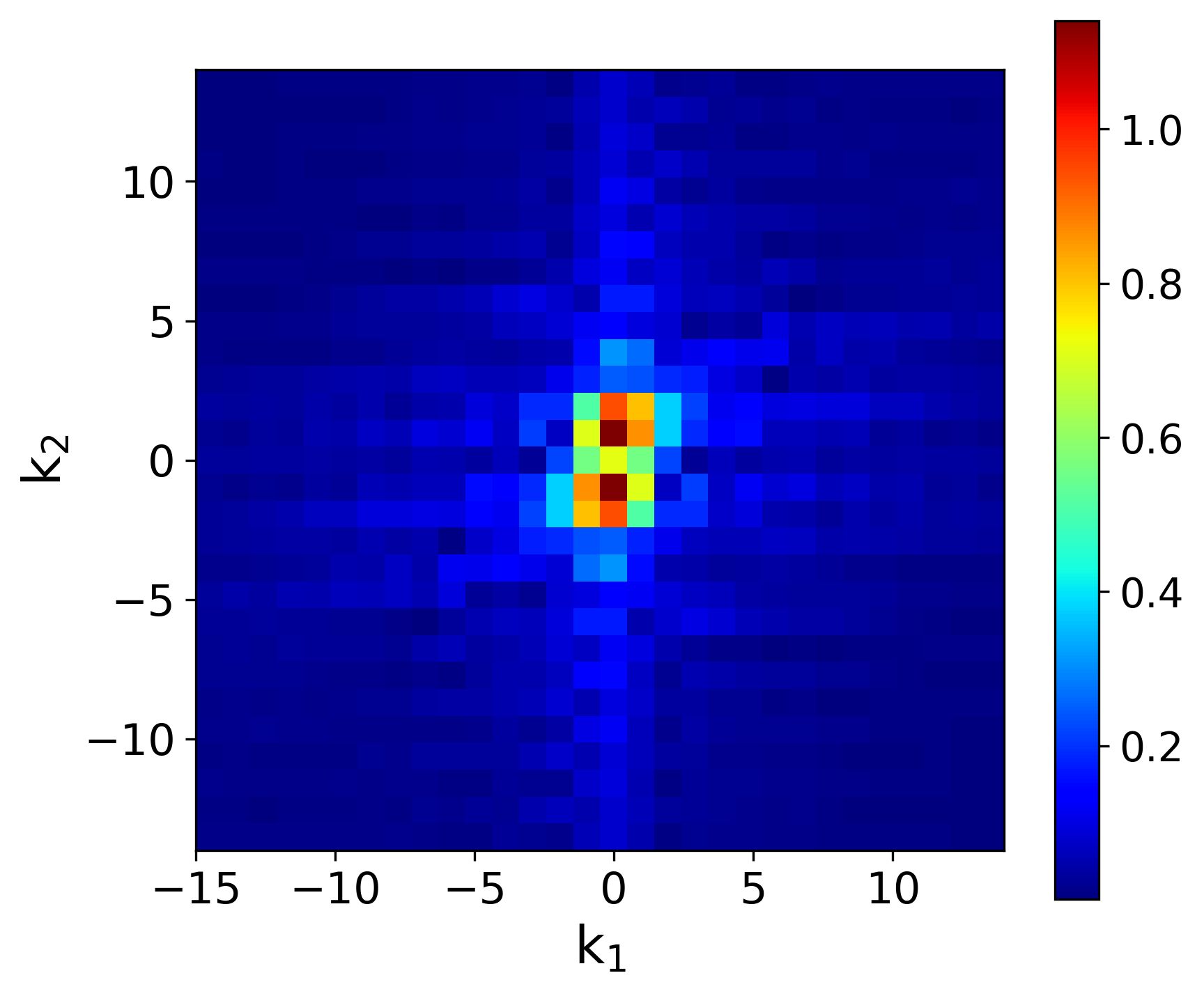}}
     \hspace{0.3 cm} 
     \subfigure[Error after $\cu{B}$]{\includegraphics[width=0.2\textwidth]{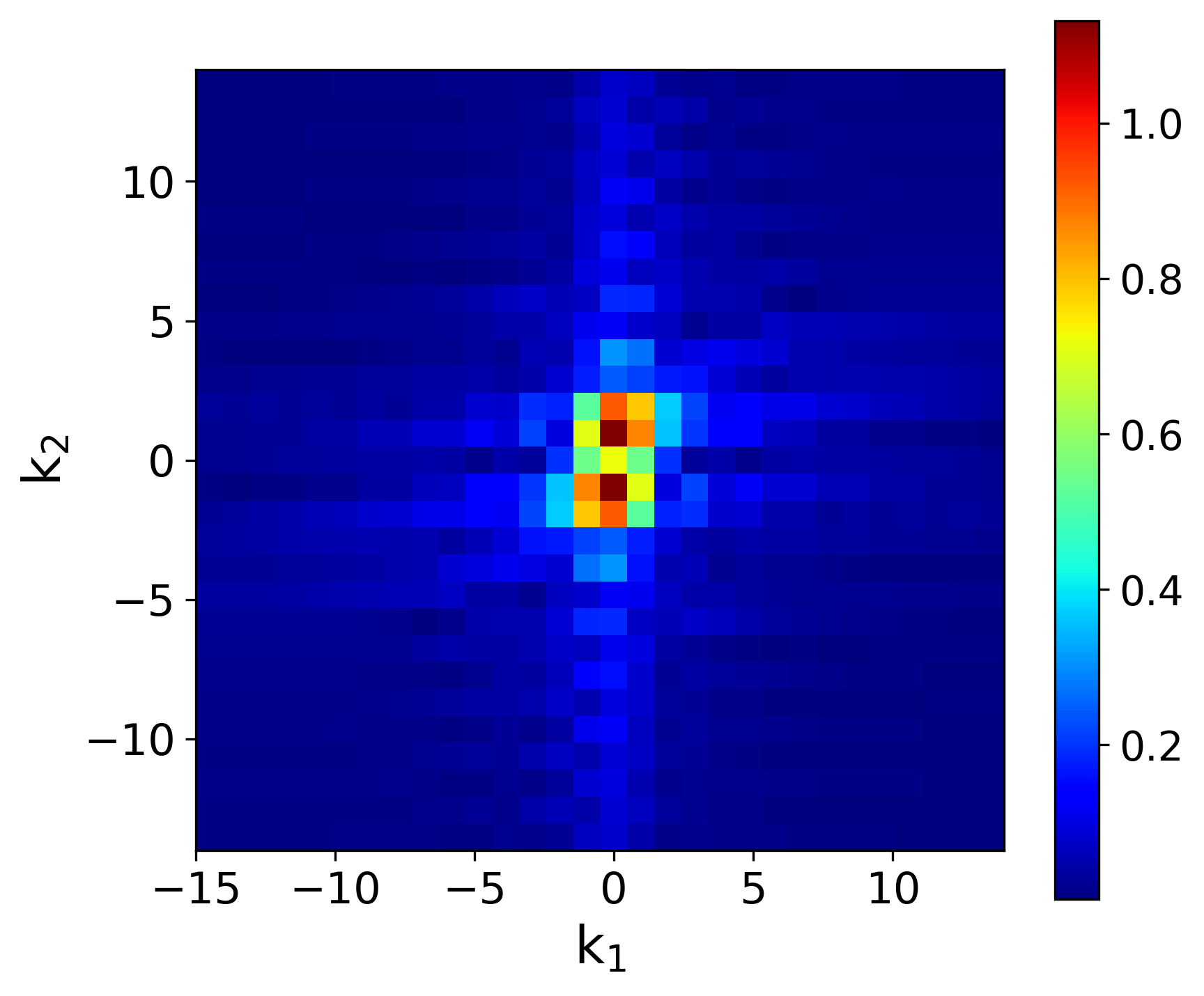}}
      \hspace{0.3 cm} 
     \subfigure[Correction from $\mathcal{H}$]{\includegraphics[width=0.2\textwidth]{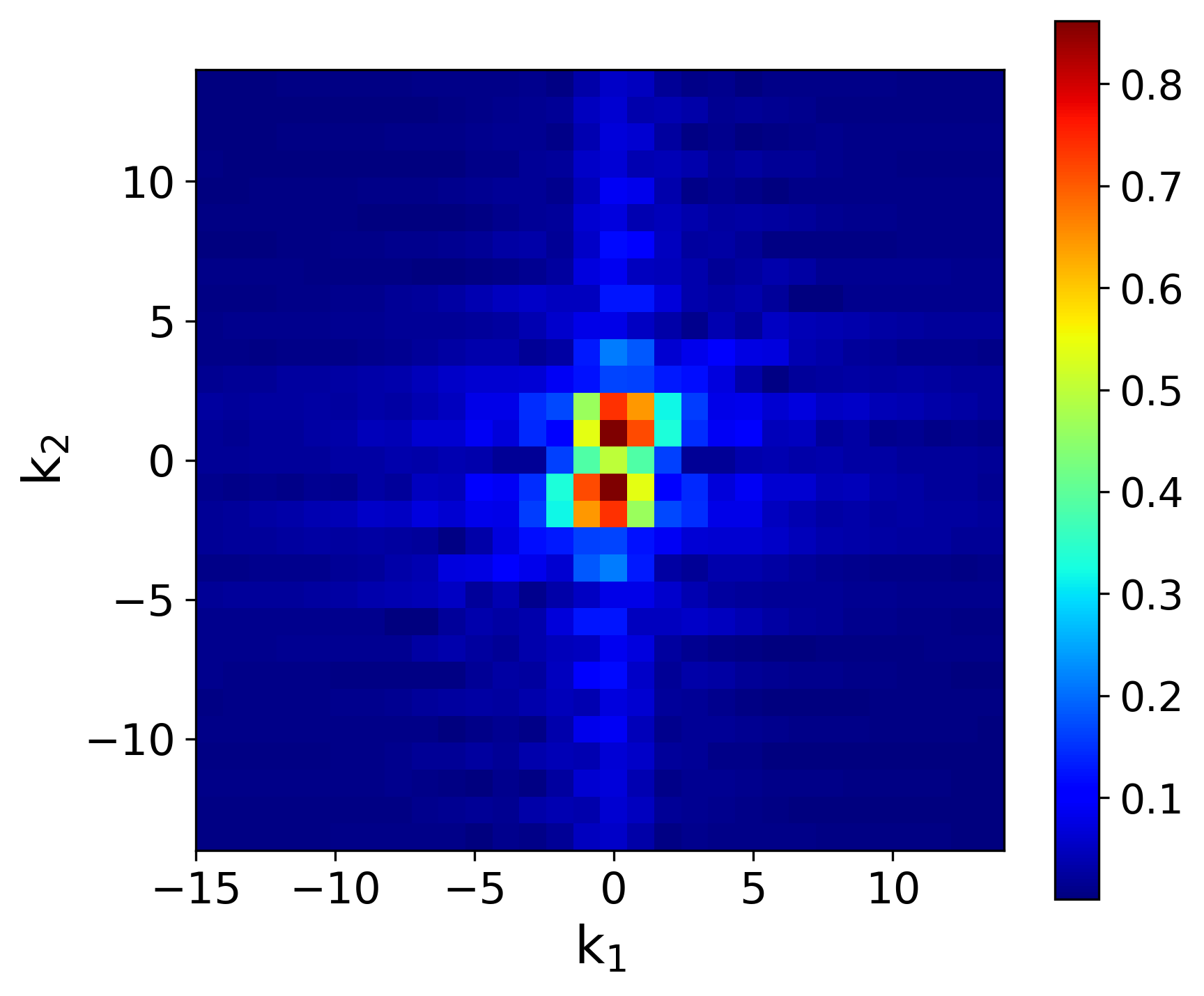}}
      \hspace{0.3 cm} 
      \subfigure[Error after correction]{\includegraphics[width=0.2\textwidth]{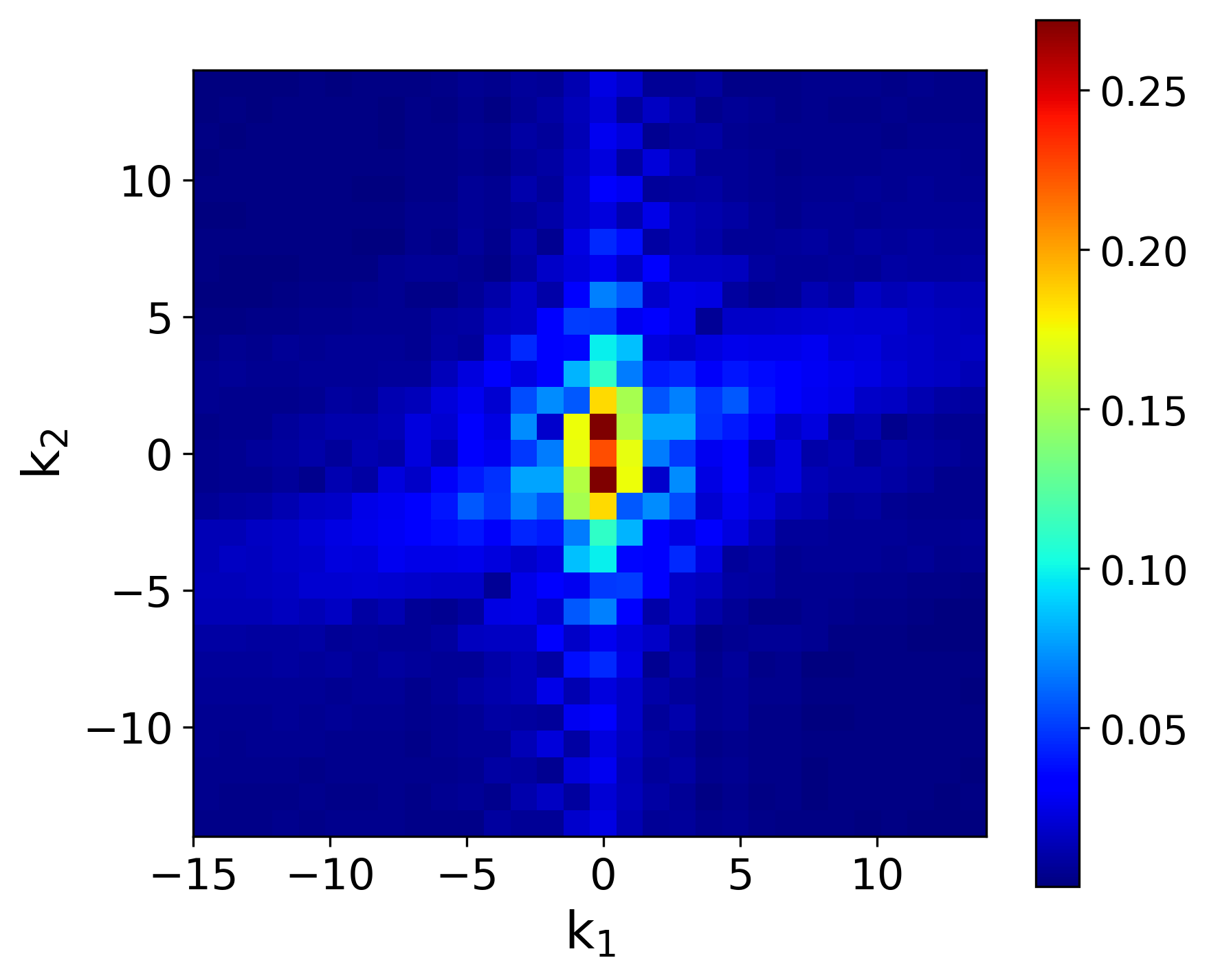}}
    \caption{Error spectrum during the AG-FNS iteration process. The first row corresponds to the 1st iteration, and the second row to the 5th iteration. The numerical ground truth is obtained by SA-AMG-I iterating until a residual of $10^{-12}$ is reached.}
    \label{fig:fourier}
\end{figure}

\noindent\textbf{Evaluation of AG-FNS}

$\bullet$\textbf{ Data-1 (Structured) }

AG-FNS is trained under the same experimental setup described above, with the training loss shown by the blue curve in Fig.~\ref{fig:2dela32}(a). Compared with G-FNS, AG-FNS reduces the loss more rapidly and reaches better final accuracy, indicating that the adaptive basis functions learned by $\mathcal{M}$ capture parameter variation and component coupling more effectively.  Results are also reported in Table~\ref{tab:ela_iter} and Fig.~\ref{fig:2dela32} (b)-(c), from which the following observations can be drawn:

\begin{itemize}
\item[(1)] \textbf{Significant improvement over G-FNS.} AG-FNS converges substantially faster than G-FNS. Notably, when used as a preconditioner, G-FNS requires approximately five times more iterations than AG-FNS.
\item[(2)] \textbf{Robustness as a solver.} As a solver, AG-FNS exhibits robust convergence under variations in $E(x)$, with iteration counts below those of SA-AMG-I/II and significantly below those of SA-AMG-III, which applies no special treatment to the near-null space. This suggests that AG-FNS effectively learns the error components associated with the near-null space.

\item[(3)] \textbf{Trade-off as a preconditioner.} When used as a preconditioner, AG-FNS is slightly less effective than SA-AMG-I/II; however, it offers the advantage of simpler implementation, as no explicit constraints are needed to preserve the null space.
\end{itemize}

To clarify the mechanism of AG-FNS, we examine the error spectra during iteration (Fig.~\ref{fig:fourier}). The spectra are obtained based on the transformation Eq.~\eqref{eq:Fourier-M-hat}. The first and second rows correspond to the first and the fifth iterations, respectively.
Comparing (b) with (c), and similarly (f) with (g), shows that the correction produced by $\mathcal{H}$ closely matches the residual error after applying $\cu{B}$ in the spectral domain. This suggests that $\mathcal{H}$ effectively captures the remaining error components and complements the smoothing operator $\cu{B}$.
Comparing (a) with (d), and (e) with (h), further shows a substantial error reduction after correction, confirming the central role of $\mathcal{H}$ in eliminating the remaining error.

\vskip 0.2cm
$\bullet$ \textbf{Data-1(Unstructured)}

AG-FNS is trained on an unstructured mesh  consisting of $N=1604$ nodes with $3208$ degrees of freedom (see Fig.~\ref{fig:sol}(a)). The frequency-domain parameter is set to $m=20$.
All other experimental settings follow those of the structured-grid experiments.

The training loss curve is shown in Fig.~\ref{fig:2dela1604}(a).
The testing results are presented in Table~\ref{tab:ela_iter} and Fig.~\ref{fig:2dela1604}(b)(c).  The results are consistent with those of the structured-mesh case, confirming that AG-FNS remains effective on unstructured meshes.

\begin{figure}[!htbp] 
    \centering
    \subfigure[]{\includegraphics[width=0.3\textwidth]{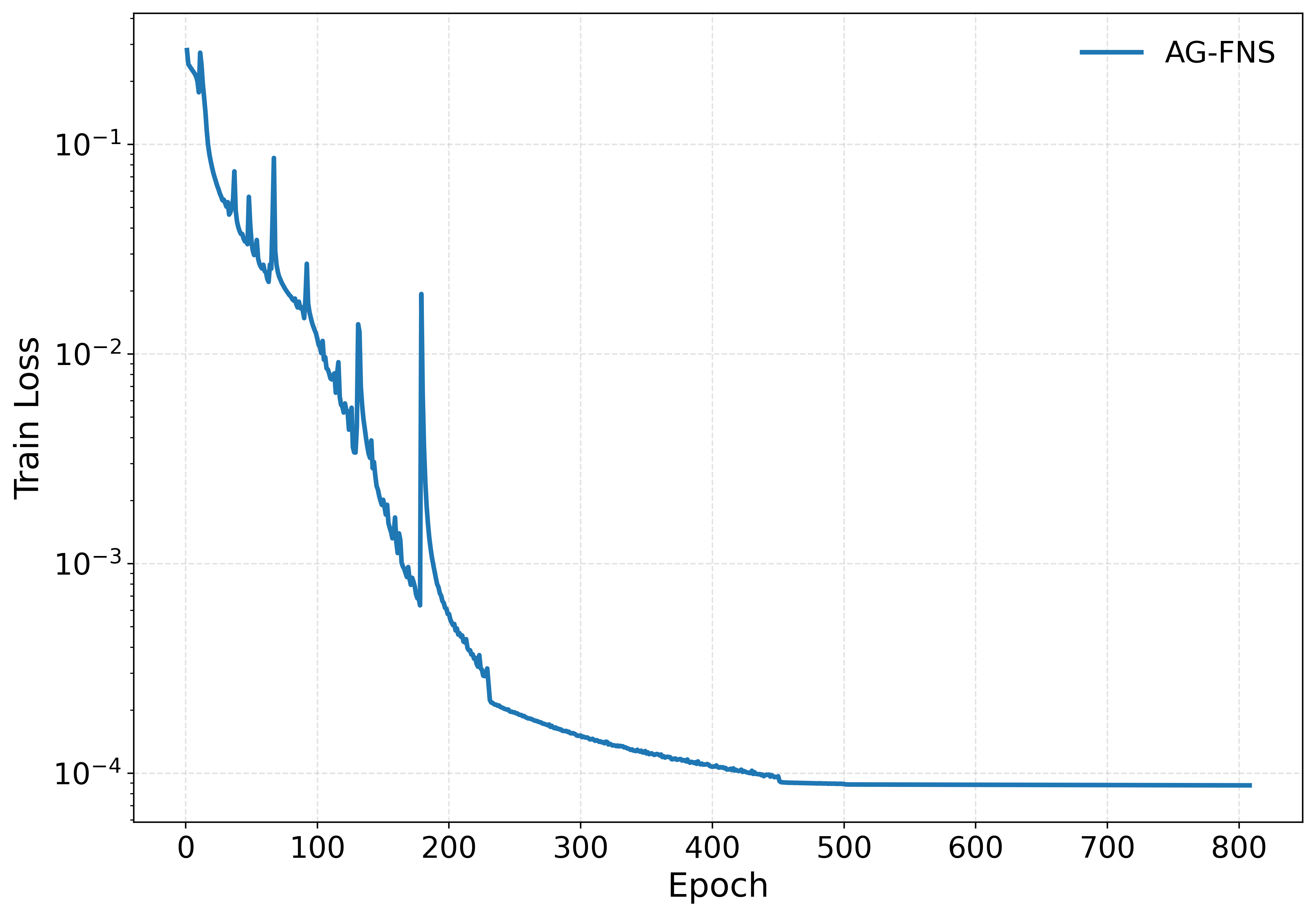}}
     \hspace{0.3cm} 
    \subfigure[]{\includegraphics[width=0.28\textwidth]{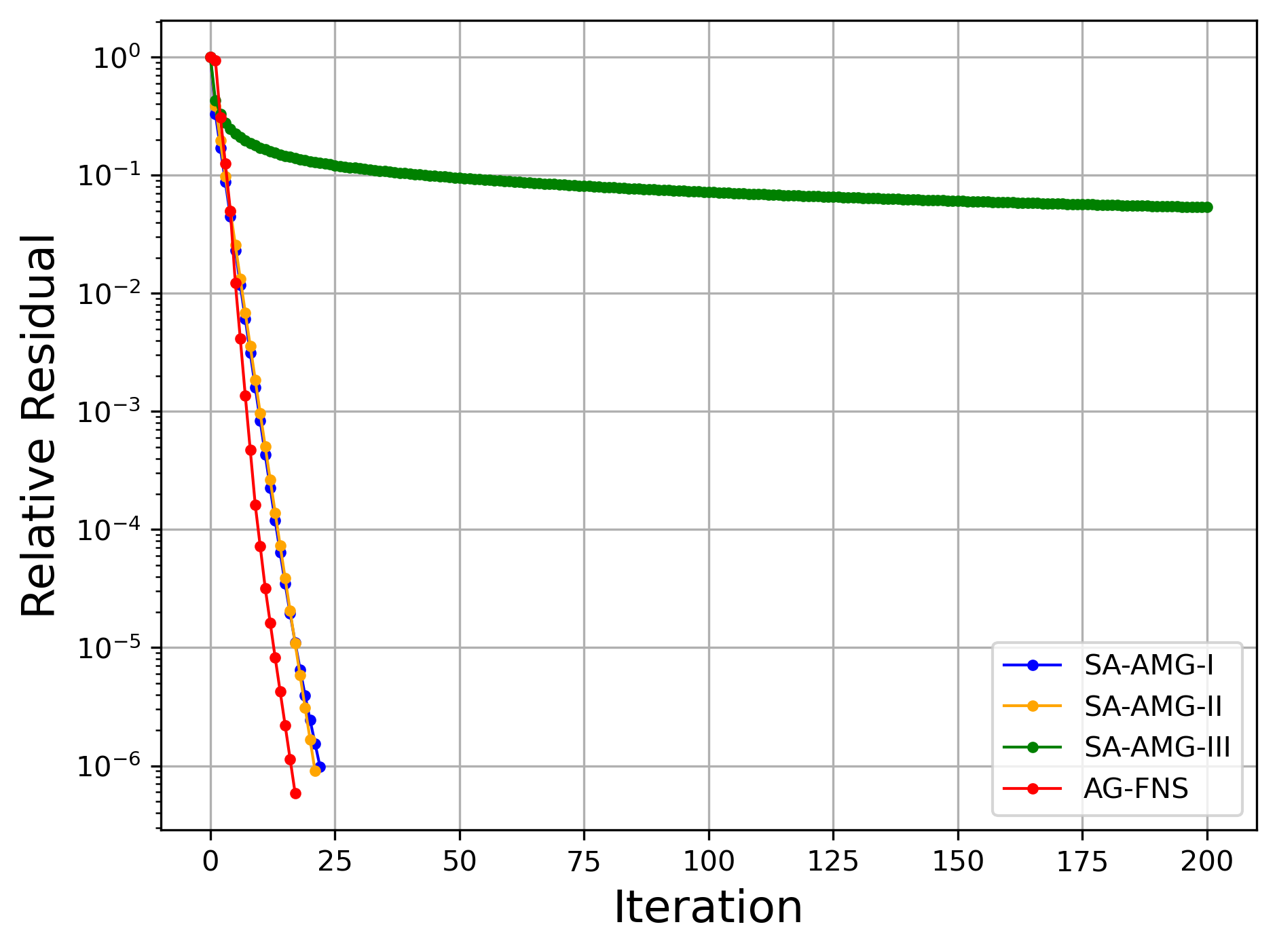}}
     \hspace{0.3cm} 
     \subfigure[]{\includegraphics[width=0.28\textwidth]{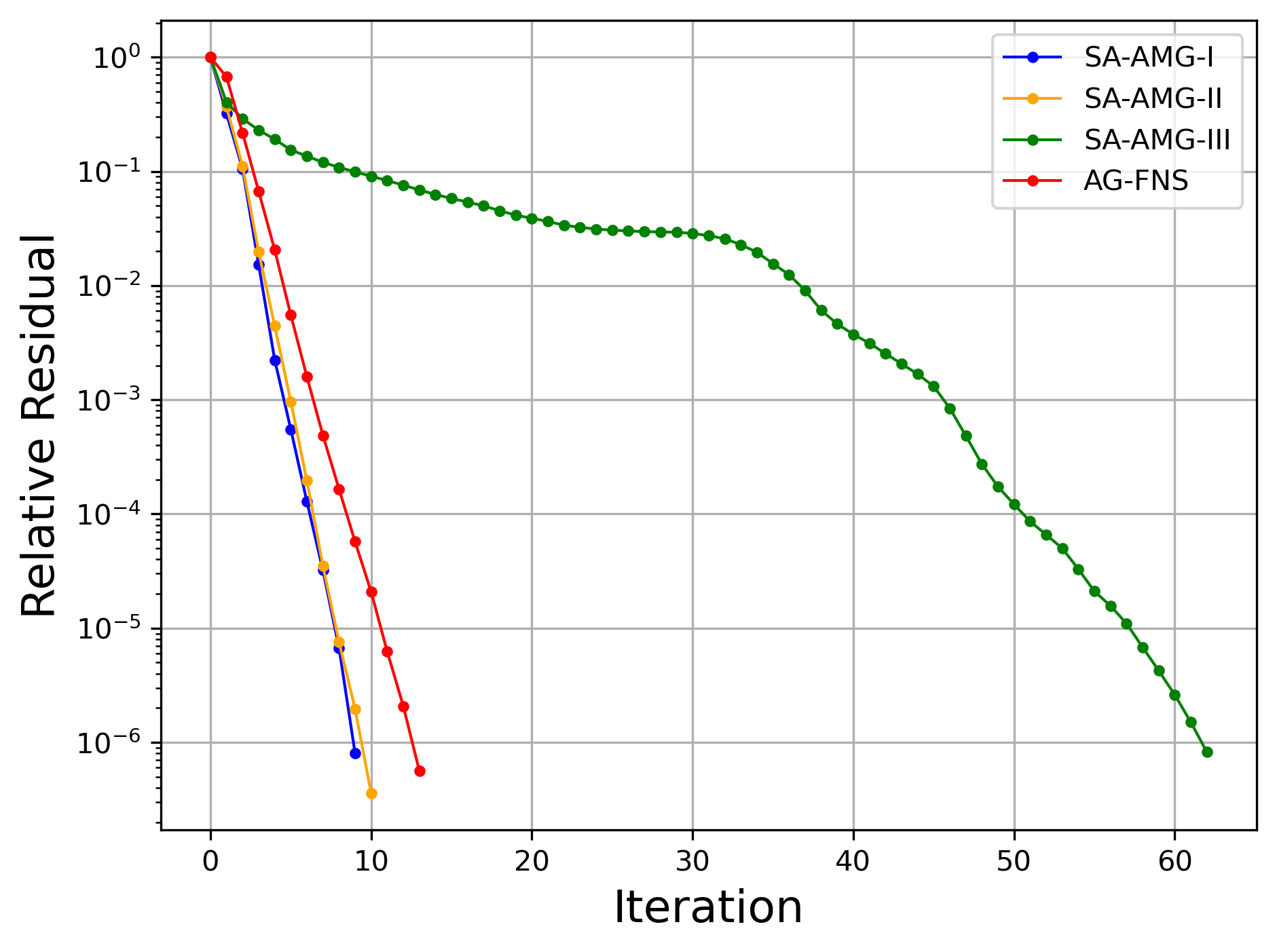}}
    \caption{Performance analysis of AG-FNS on Data-1(Unstructured): (a) training loss trajectory; (b) as iterative solvers; and (c) as preconditioners for FGMRES.}
    \label{fig:2dela1604}
\end{figure}

\vskip 0.2cm
$\bullet$ \textbf{Data-2}

We further evaluate the performance of AG-FNS in the three-dimensional case ($N=3685$ nodes with 11055 degrees of freedom, see Fig.~\ref{fig:3dmesh}). Since the smoothing effect weakens for three-dimensional problems, the smoothing operator $\cu{B}$ is adjusted to 50  weighted block Jacobi iterations, and the frequency-domain parameter is set to $m=6$.
All other experimental settings follow those described above.

The blue curve in Fig.~\ref{fig:3delaL1}(a) shows that the training loss decreases slowly. As shown in Table~\ref{tab:ela_iter} and Fig.~\ref{fig:3delaL1}(b)(c), AG-FNS fails to converge within 500 iterations when used as a solver. When used as a preconditioner, convergence can still be achieved but requires $25.1\pm1.04$ iterations,  significantly more than that of SA-AMG-I ($10.1\pm0.3$ iterations).

These results indicate that the single-level frequency-domain correction has clear limitations for high-dimensional and complex problems, and a multilevel structure is necessary to further enhance its performance.

\noindent\textbf{Evaluation of ML-AG-FNS}

$\bullet$ \textbf{Data-2}

We apply ML-AG-FNS to improve the convergence of AG-FNS in the three-dimensional setting. The number of correction levels is set to
The number of correction levels is set to $L=4$, and the frequency-domain parameters are chosen as $m_i = 7-i$ ($i=1,\ldots,4$).  All other experimental settings follow those of AG-FNS.

The training loss is shown by the yellow dashed curve in Fig.~\ref{fig:3delaL1}(a). 
Compared with AG-FNS, the loss decreases more rapidly and reaches a lower final value, confirming the effectiveness of the multilevel structure.
The testing results are reported in Table~\ref{tab:ela_iter} and Fig.~\ref{fig:3delaL1}(b)(c). 
When used as a solver, the average number of iterations is reduced to $7.9\pm1.2$. 
When used as a preconditioner, it further decreases to $7.4\pm0.4$. 
Both results significantly outperform AG-FNS and all SA-AMG variants, demonstrating the clear advantage of multilevel frequency-domain decomposition for three-dimensional problems.

\begin{figure}[!htbp]
    \centering
     \subfigure[]{\includegraphics[width=0.3\textwidth]{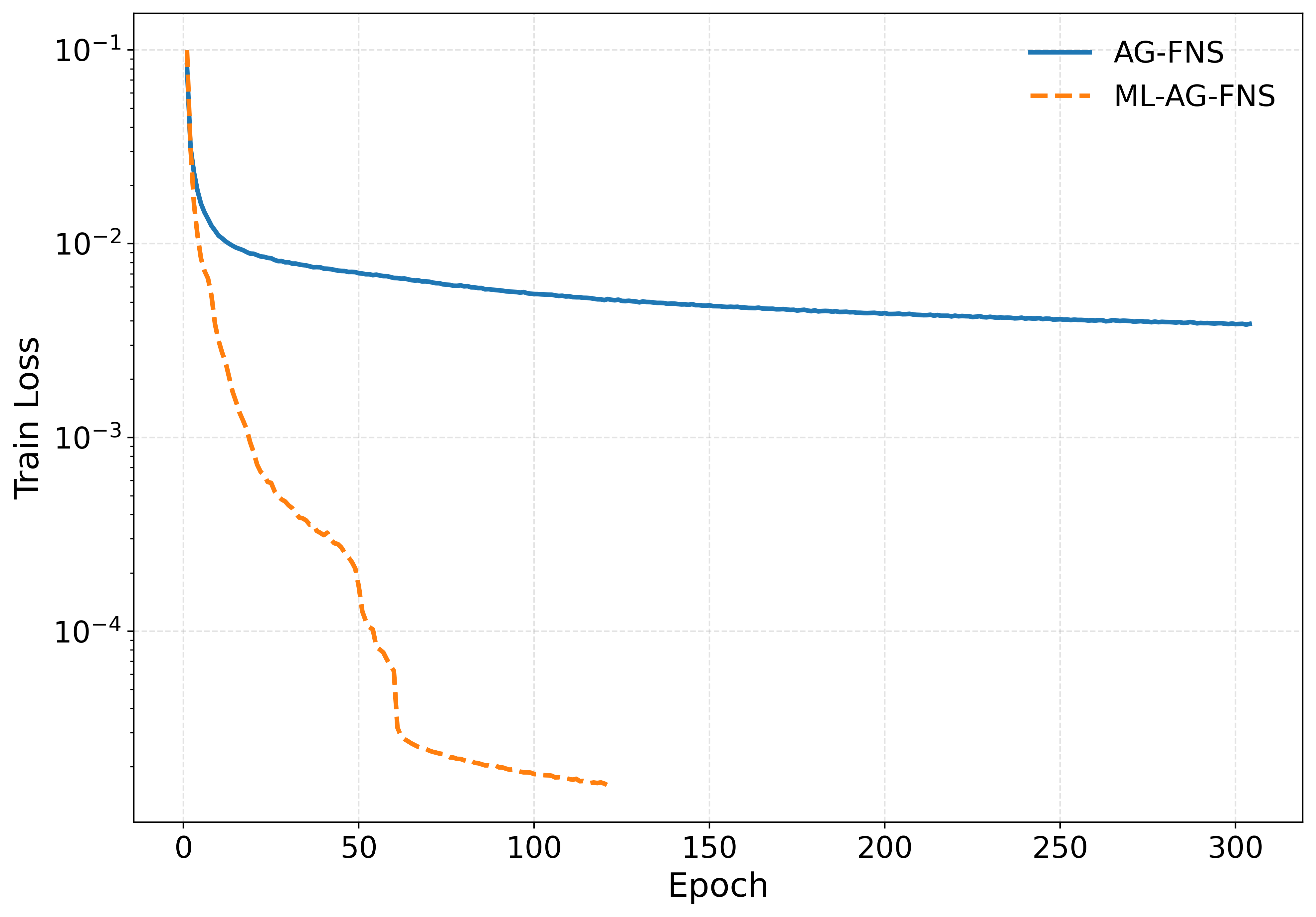}}
    \hspace{0.3cm} 
    \subfigure[]{\includegraphics[width=0.28\textwidth]{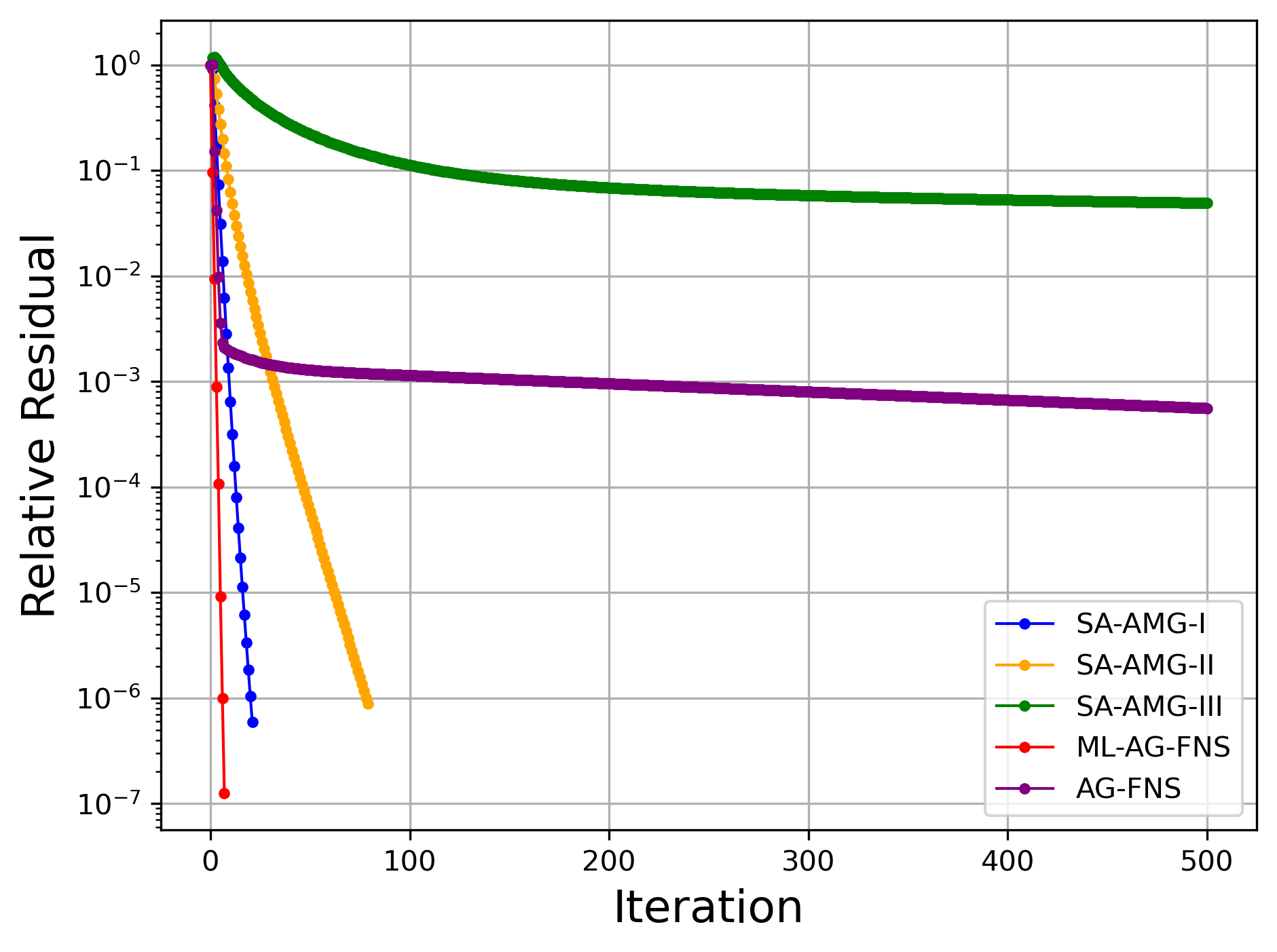}}
     \hspace{0.3cm} 
     \subfigure[]{\includegraphics[width=0.28\textwidth]{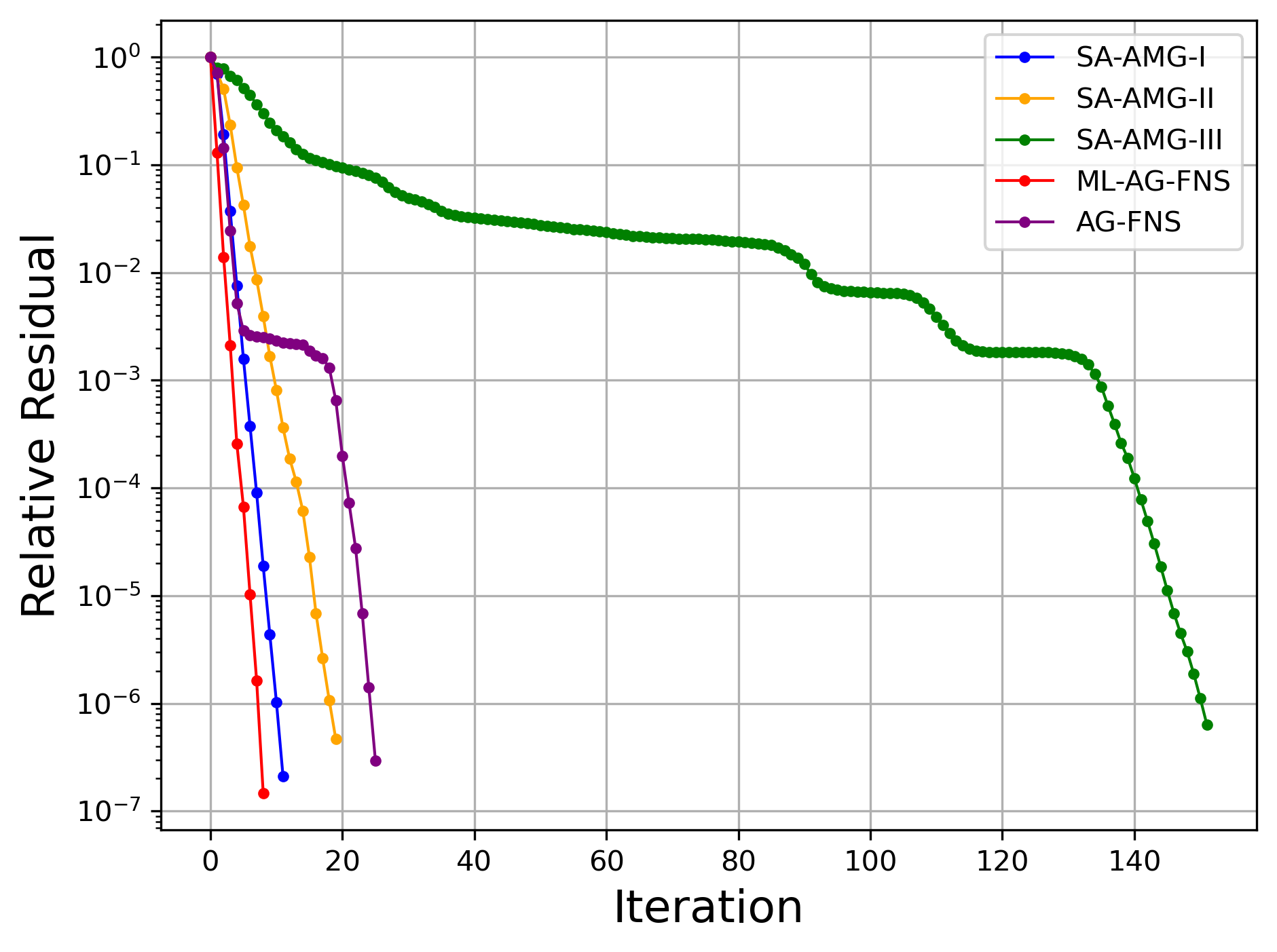}}
    \caption{Performance analysis on Data-2 comparing AG-FNS and ML-AG-FNS: (a) training loss trajectories; (b) as iterative solvers; and (c) as preconditioners for FGMRES.}
    \label{fig:3delaL1}
\end{figure}
$\bullet$ \textbf{Data-3}

We further evaluate ML-AG-FNS on a two-dimensional anisotropic dataset defined on an unstructured mesh with 3674 degrees of freedom (see Fig.~\ref{fig:ani_ela_mesh}(a)). 
The smoothing operator $\cu{B}$ consists of 50  weighted block Jacobi iterations. 
We set $L=4$ and choose the frequency-domain parameters as $m_i = 21-2i $ ($i=1,\ldots,4$).  Since the material parameters are global (see Eq.~\ref{data3C}), we employ a fully connected network with two hidden layers of sizes
1024 and 2048 to predicts $\widetilde{\Lambda}$ and $\mathcal{C}$.

Unlike the isotropic datasets considered previously, Data-3 exhibits significantly stronger material anisotropy: 
the ratio of Young's moduli along the principal directions can reach up to $E_1/E_2 = 4000$, spanning three orders of magnitude, and the rotation angle $\theta$ varies randomly over $[0,\pi/2]$. 
Consequently, the stiffness matrices of different samples differ substantially in structure, leading to large variations in iteration counts.
For this reason, we use residual reduction curves rather than iteration-count tables to illustrate solver performance.

Fig.~\ref{fig:2dani_ela} shows the residual decay curves for two representative test samples, where the left column corresponds to using the methods as iterative solvers and the right column corresponds to using them as preconditioners for FGMRES. The results show that SA-AMG variants exhibits markedly different convergence behavior across the two samples when used as an iterative solver: it converges reasonably for some samples but stagnates for others, indicating strong sensitivity to anisotropy strength and material rotation direction. By contrast, although the convergence rate of ML-AG-FNS also varies across samples, it remains stable without stagnation,  suggesting that the combination of adaptive basis functions and multilevel frequency-domain decomposition allows the solver to adapt effectively to varying levels of material anisotropy and maintain robustness even in challenging cases where SA-AMG variants fail to converge.

\begin{figure}[h]
    \centering
    \subfigure[]{\includegraphics[width=0.3\textwidth]{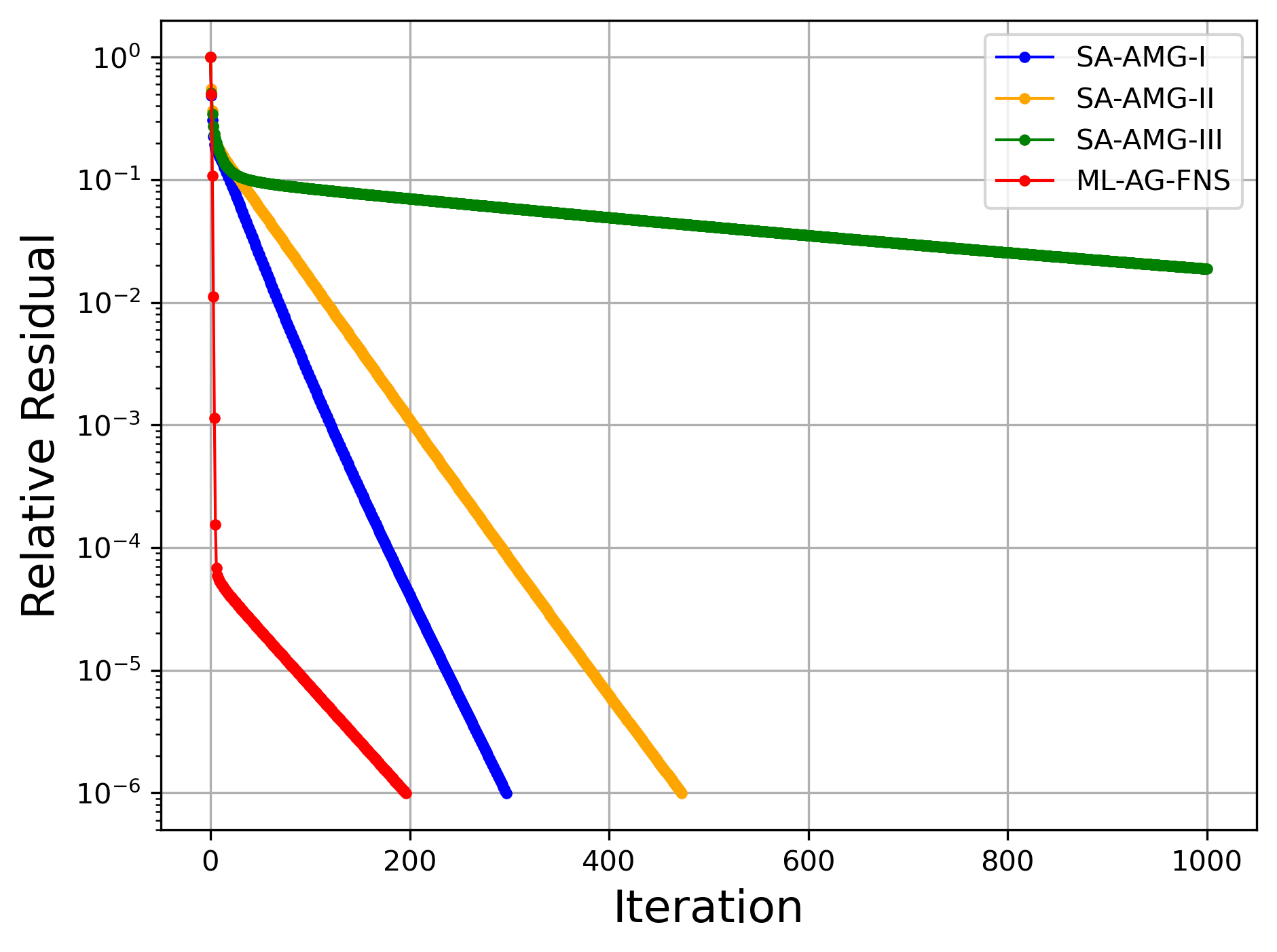}}
     \hspace{1cm}
    \subfigure[]{\includegraphics[width=0.3\textwidth]{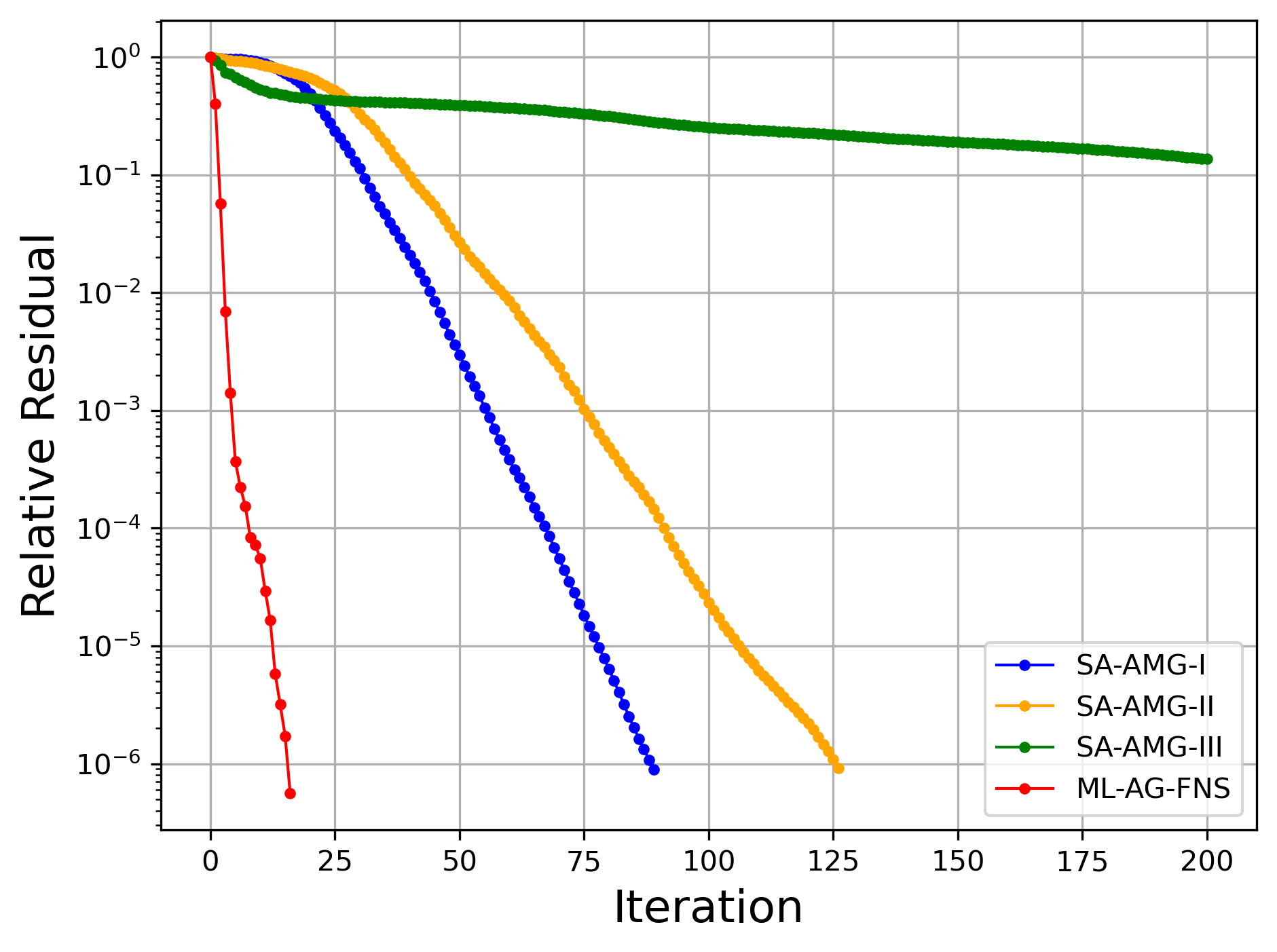}}\\
    \subfigure[]{\includegraphics[width=0.3\textwidth]{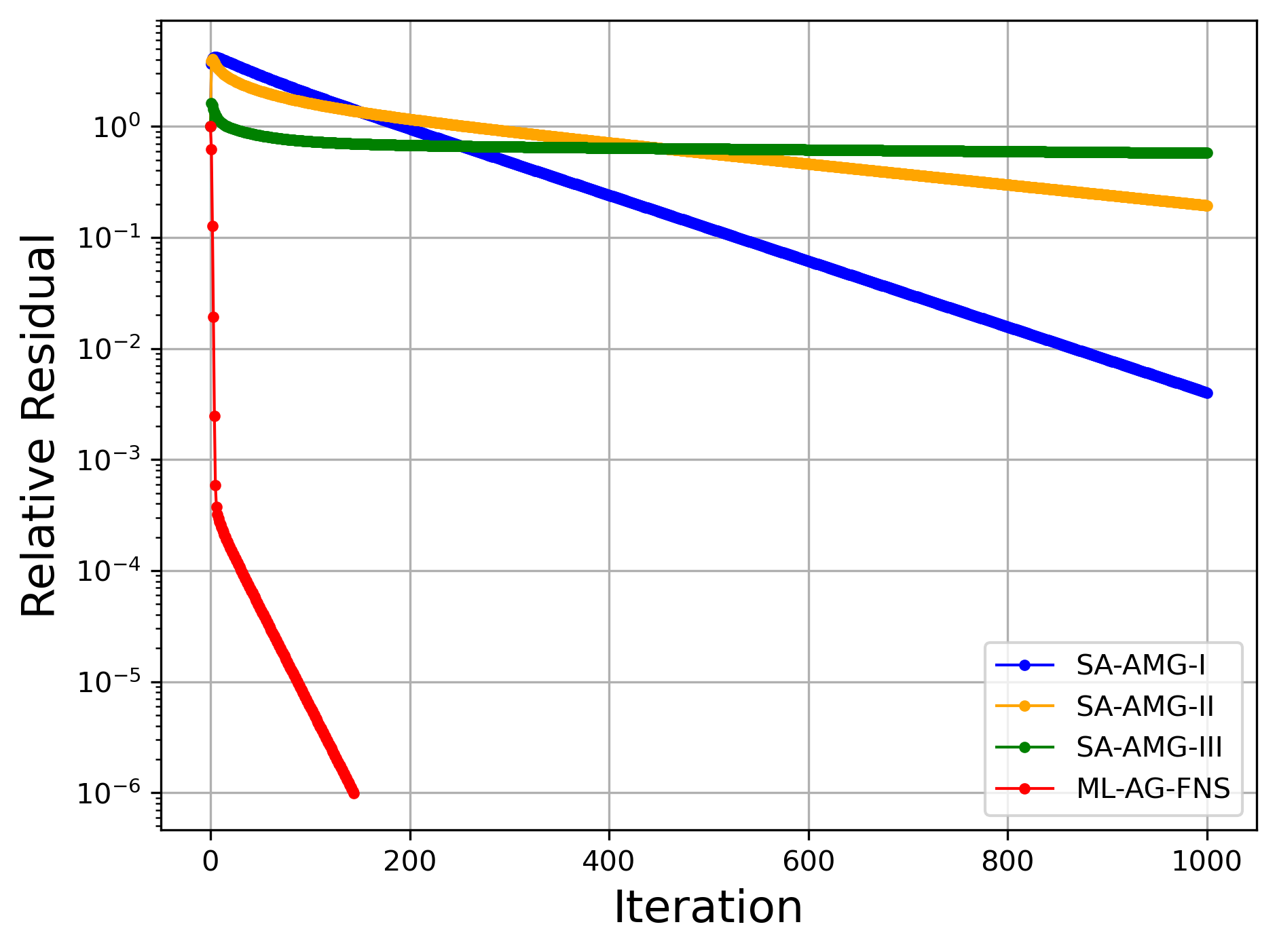}}
     \hspace{1cm}
    \subfigure[]{\includegraphics[width=0.3\textwidth]{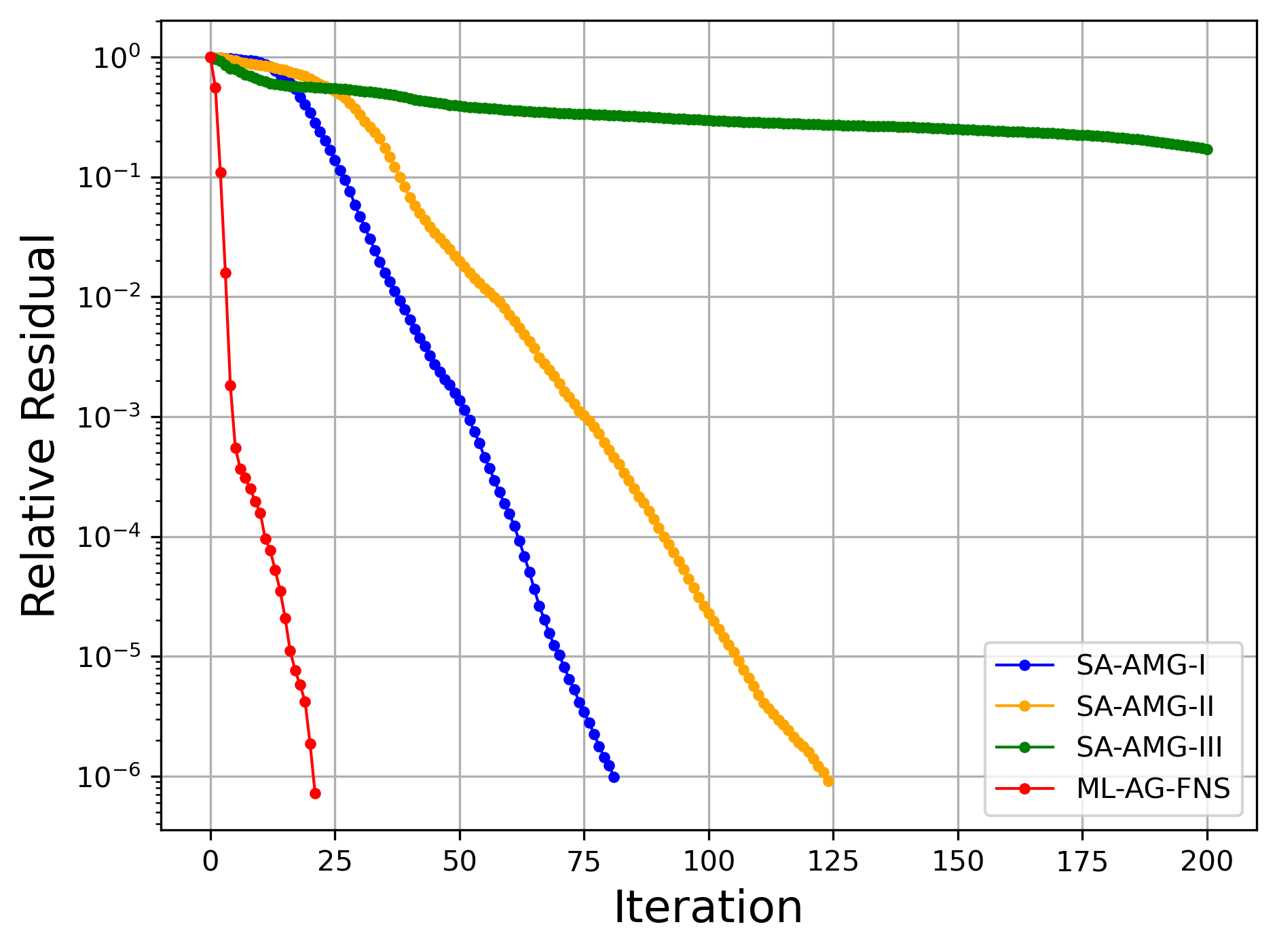}} 
    
    \caption{Convergence history on Data-3: as iterative solvers (left) and as preconditioners for FGMRES (right).}
    \label{fig:2dani_ela}
\end{figure}

$\bullet$ \textbf{Data-4}

We further evaluate the performance of ML-AG-FNS in the three-dimensional orthotropic elasticity.  A fully connected network with two hidden layers of sizes 512 and 1024  is employed to predict $\widetilde{\Lambda}$ and $\mathcal{C}$, and the frequency-domain parameters are chosen as $m_i = 7-i$ ($i=1,\ldots,4$).   All other experimental settings follow those of Data-3.
 
The average iteration numbers are reported in Table~\ref{tab:ela_iter}, and the residual decay curves are shown in Fig.~\ref{fig:3dani_ela}. The results demonstrate that ML-AG-FNS maintains fast and stable convergence for three-dimensional orthotropic materials while clearly outperforming all SA-AMG baselines, confirming its ability to effectively capture variable-coefficient behavior in high-dimensional anisotropic settings and highlighting its potential for broader applicability to complex physical systems.

\begin{figure}[!htbp]
    \centering
    \subfigure[]{\includegraphics[width=0.3\textwidth]{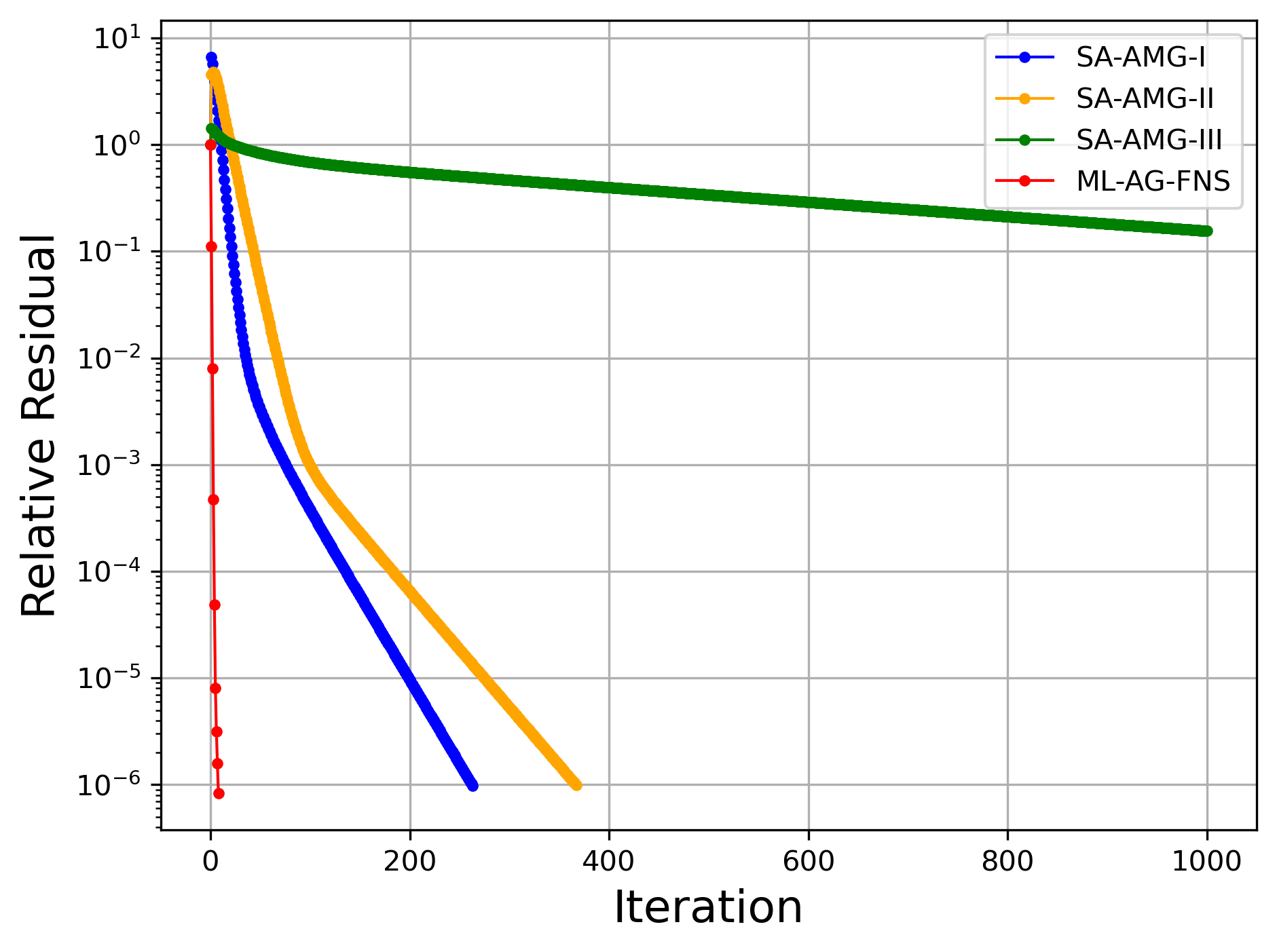}}
     \hspace{1cm} 
    \subfigure[]{\includegraphics[width=0.3\textwidth]{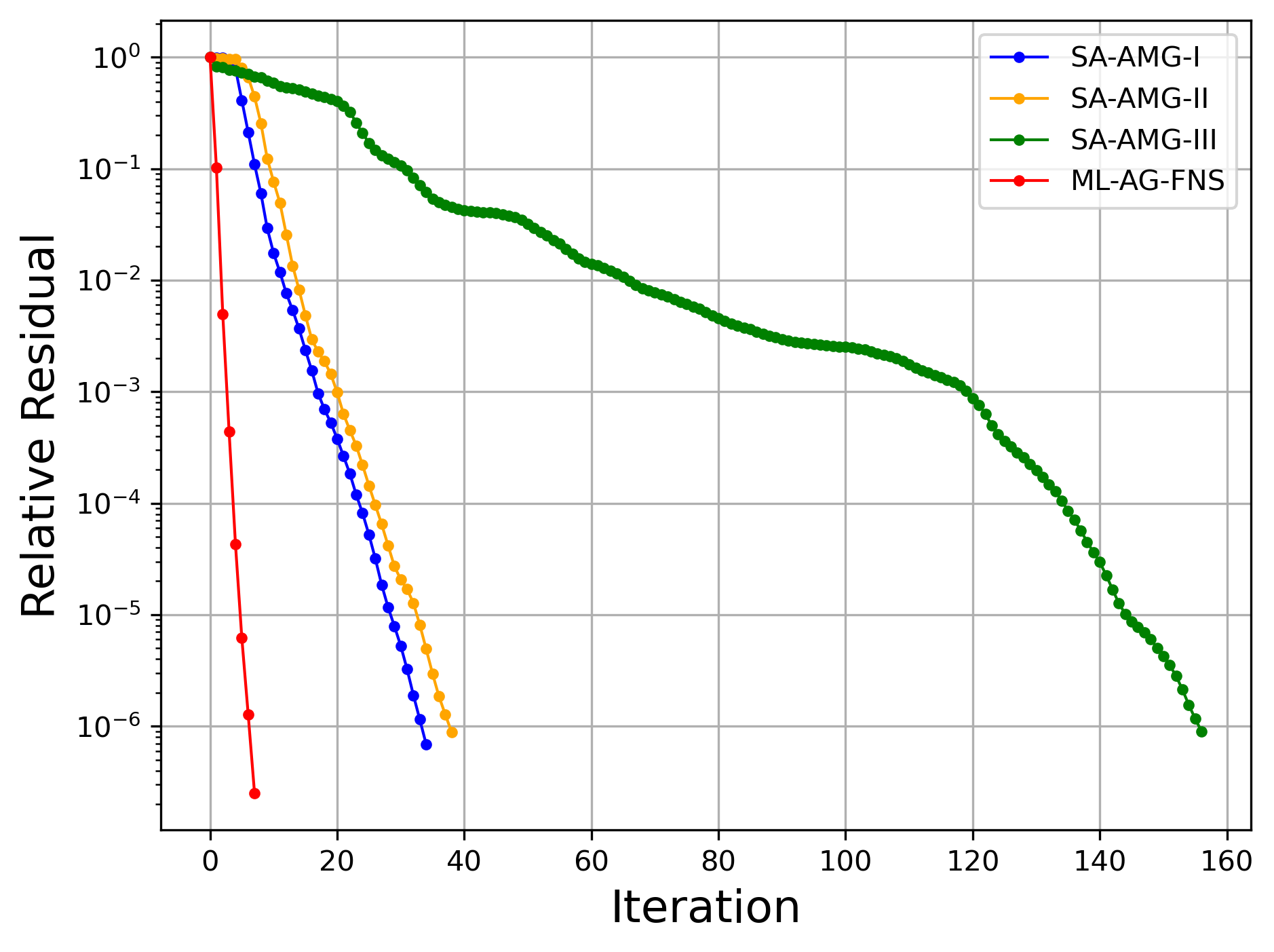}}
    \caption{Convergence history on Data-4: (a) as iterative solvers, and (b) as preconditioners for FGMRES.}
    \label{fig:3dani_ela}
\end{figure}

Overall, the linear elasticity experiments demonstrate consistent improvements across the three architectures. G-FNS exhibits slow convergence, indicating that replacing the network architecture alone is insufficient to address the complexity of variable-coefficient, vector-valued PDEs. By introducing adaptive basis functions, AG-FNS achieves good convergence on both structured and unstructured meshes in two dimensions, but still shows limitations in three-dimensional settings. By further incorporating multilevel frequency-domain decomposition, ML-AG-FNS overcomes this limitation and significantly reduces iteration counts for both three-dimensional isotropic and strongly anisotropic cases, outperforming all SA-AMG variants. Taken together, these results confirm the effectiveness and robustness of the proposed framework for high-dimensional linear elasticity problems.

\section{Conclusion and Outlook} \label{sec:5}

This work extends the FNS paradigm from structured-mesh scalar problems to a unified hybrid framework that also accommodates unstructured meshes and vector-valued PDE systems. Its development proceeds in three stages: G-FNS removes grid dependence through graph-based operators, AG-FNS introduces adaptive spectral coordinates, and ML-AG-FNS incorporates multilevel frequency decomposition to better capture multiscale error components.
From the theoretical perspective, we establish a global convergence result for the hybrid iteration; under symmetry, boundedness, and coercivity assumptions, the resulting estimate is mesh-independent in the energy norm. From the numerical perspective, experiments on anisotropic diffusion and linear elasticity demonstrate consistent improvements in robustness and efficiency over SA-AMG baselines, with especially clear gains for strongly anisotropic and high-dimensional cases.
Taken together, these findings directly address the key challenges identified in the introduction—complex geometries, coefficient heterogeneity, and strong variable coupling—and provide a practical, theoretically grounded pathway toward robust neural-augmented iterative solvers for scientific computing.

Future work will focus on extending the framework to near-incompressible elasticity with mixed finite element discretizations, developing scalable distributed implementations of graph-based operators for large-scale multi-GPU settings, and generalizing the architecture to other block-coupled multi-physics systems such as fluid--structure interaction and poroelasticity.

\section*{Acknowledgments}
This work is funded by the NSFC grants (12371373). Shi Shu is supported by the Science Challenge Project (TZ2024009). Yun Liu is supported by the Postgraduate Scientific Research Innovation Project of Xiangtan University (XDCX2025Y193). Computations were performed at the High Performance Computing Platform of Xiangtan University.

\bibliographystyle{elsarticle-num} 
\bibliography{ref}

\end{document}